\documentclass[12pt]{article}
\usepackage{amsmath,amsfonts,amssymb, amsthm}
\usepackage{hyperref} 
\hypersetup{colorlinks=true, citecolor=blue}
\usepackage{multirow} 
\usepackage{caption} 
\usepackage{kotex}
\usepackage{xcolor} 
\usepackage[sort]{natbib}
\usepackage{fullpage} 
\usepackage{float} 
\usepackage{chngpage} 
\usepackage{lscape}
\usepackage{mathtools}
\usepackage[blocks]{authblk}
\usepackage{booktabs} 
\usepackage{pbox} 
\usepackage{algpseudocode, algorithm}
\usepackage{enumerate}



\newcounter{algsubstate}
\makeatletter

\makeatother

\bibpunct{(}{)}{;}{a}{}{,} 

\newtheorem{theorem}{Theorem}
\newtheorem*{theorem*}{Theorem}
\newtheorem{assumption}{Assumption}

\newtheorem{lemma}{Lemma}

\newcommand{\bA}{\boldsymbol{A}}
\newcommand{\bB}{\boldsymbol{B}}

\newcommand{\bD}{\boldsymbol{D}}

\newcommand{\bI}{\boldsymbol{\rm I}}

\newcommand{\bS}{\boldsymbol{S}}

\newcommand{\bU}{\boldsymbol{U}}

\newcommand{\bX}{\boldsymbol{X}}
\newcommand{\bY}{\boldsymbol{Y}}
\newcommand{\bZ}{\boldsymbol{Z}}

\newcommand{\bu}{\boldsymbol{u}}

\newcommand{\bx}{\boldsymbol{x}}

\newcommand{\bzero}{\boldsymbol{0}}
\newcommand{\bone}{\boldsymbol{1}}

\newcommand{\balpha}{\boldsymbol{\alpha}}

\newcommand{\bdelta}{\boldsymbol{\delta}}

\newcommand{\bgamma}{\boldsymbol{\gamma}}
\newcommand{\bmu}{\boldsymbol{\mu}}
\newcommand{\bpi}{\boldsymbol{\pi}}

\newcommand{\bSigma}{\boldsymbol{\Sigma}}

\newcommand{\diag} {{\rm {diag}}}
\newcommand{\tr} {{\rm {tr}}}

\title{Sparse Hanson-Wright Inequality for a Bilinear Form of Sub-Gaussian Variables}

\author[1]{Seongoh Park}

\author[2]{Xinlei Wang}

\author[3]{Johan Lim\footnote{To whom all correspondence should be addressed. Email: \texttt{johanlim@snu.ac.kr}}}

\affil[1]{School of Mathematics, Statistics and Data Science, Sungshin Women's University, Seoul, Korea} 
\affil[2]{Department of Statistical Science, Southern Methodist University, Dallas, TX, USA}
\affil[3]{Department of Statistics, Seoul National University, Seoul, Korea} 

\date{}

\begin{document}
\maketitle

\begin{abstract} 
	\noindent 
In this paper, we derive a new version of Hanson-Wright inequality for a sparse bilinear form of sub-Gaussian variables. Our results are generalization of previous deviation inequalities that consider either sparse quadratic forms or dense bilinear forms. We apply the new concentration inequality to testing the cross-covariance matrix when data are subject to missing. Using our results, we can find a threshold value of correlations that controls the family-wise error rate. Furthermore, we discuss the multiplicative measurement error case for the bilinear form with a boundedness condition.
	
	\vskip0.5cm 
	\noindent {\bf Keywords:} 
	Concentration inequality; covariance matrix; missing data; measurement error; general missing dependency.
\end{abstract} 
\baselineskip 18pt

\section{Introduction}

Let $(Z_{1j}, Z_{2j})$, $j=1,\ldots, n$, be pairs of (possibly dependent) sub-Gaussian random variables with mean zero. Suppose $\gamma_{ij}$ is a random variable that corresponds to $Z_{ij}$ and is independent with $Z_{ij}$. We allow (general) dependency between $\gamma_{1j}$ and $\gamma_{2j}$. We write the random variables by vectors $\bZ_i = (Z_{i1}, \ldots, Z_{in})^\top$, $\bgamma_i = (\gamma_{i1}, \ldots, \gamma_{in})^\top$, $i=1,2$. For a given $n\times n$ non-random matrix $\bA$, \cite{Rudelson:2013} dealt with a concentration inequality of a quadratic form $\bZ_1^\top \bA \bZ_1$, which is known as Hanson-Wright inequality. Recently, this result has been extended in two directions.
On the one hand, \cite{Zhou:2019} included Bernoulli random variables $\bgamma_1$ in the quadratic form, i.e., $(\bZ_1 * \bgamma_1)^\top \bA (\bZ_1 * \bgamma_1)$ where $*$ is the Hadamard (element-wise) product of two vectors (or matrices). On the other hand, \cite{Park:2021} proved that the sub-Gaussian vectors could be distinct $\bZ_1 \neq \bZ_2$  and derived the deviation inequality of a bilinear form $\bZ_1^\top \bA \bZ_2$.

In this paper, considering the two extensions all together, we aim to analyze the concentration of the bilinear form $(\bZ_1 * \bgamma_1)^\top \bA (\bZ_2 * \bgamma_2)$ where $\bgamma_1, \bgamma_2$ are vectors of Bernoulli random variables that may be dependent on each other. Furthermore, we generalize the Bernoulli variables to bounded random variables, which is the first attempt in the literature. Table \ref{tab:prev_result} compares different types of bilinear forms. 
One may consider converting the bilinear form into a quadratic form by concatenating $\bZ_1, \bZ_2$, i.e. 
$$
(\bZ_1 * \bgamma_1)^\top \bA (\bZ_2 * \bgamma_2)= (\bar{\bZ} * \bar{\bgamma})^\top 
\begin{bmatrix}
\bzero & \bA/2\\
\bA/2 & \bzero
\end{bmatrix}(\bar{\bZ} * \bar{\bgamma})
$$
where $\bar{\bZ}=(\bZ_1^\top, \bZ_2^\top)^\top, \bar{\bgamma}=(\bgamma_1^\top, \bgamma_2^\top)^\top$. However, we cannot directly apply the previous results (e.g. Theorem 1 in \cite{Zhou:2019}) because entries in $\bar{\bZ}$ are not independent.

We apply the new version of Hanson-Wright inequality to estimating the high-dimensional cross-covariance matrix when data are subject to either missingness or measurement errors. We treat the problem as multiple testing of individual elements of the matrix and propose an estimator thresholding their sample estimates. To determine the cutoff value, we make use of the extended Hanson-Wright inequality and thus can control the family-wise error rate at a desirable level.

\begin{table}[t]
	\centering
	\begin{tabular}{|c|p{6cm}|p{6cm}|}
		\hline
		& Identical ($\bZ_1=\bZ_2$) & Distinct ($\bZ_1\neq \bZ_2$)\\
		\hline
		Dense & $\bZ_1^\top \bA \bZ_1$, \newline \citep{Rudelson:2013} & $\bZ_1^\top \bA \bZ_2$, \newline \citep{Park:2021} \\
		\hline
		Sparse & $(\bZ_1 * \bgamma_1)^\top \bA (\bZ_1 * \bgamma_1)$, \newline \citep{Zhou:2019} & $(\bZ_1 * \bgamma_1)^\top \bA (\bZ_2 * \bgamma_2)$ \\
		\hline
	\end{tabular}
	\caption{Comparison of bilinear forms in the literature and their references.}
	\label{tab:prev_result}
\end{table}

The paper is organized as follows. In Section 2, we present the main result of $(\bZ_1 * \bgamma_1)^\top \bA (\bZ_2 * \bgamma_2)$, Theorem \ref{thm:HWineq_claim}, and apply it to the problem of testing the cross-covariance matrix in Section 3. In Section 4, we conclude this paper with a brief discussion.

\section{Main result}

We first define the sub-Gaussian (or $\psi_2$-) norm of a random variable $X$ in $\mathbb{R}$ by
$$
||X||_{\psi_2} = \sup_{p \ge 1} \dfrac{(\mathbb{E}|X|^p)^{1/p}}{\sqrt{p}},
$$
and $X$ is called sub-Gaussian if its $\psi_2$-norm is bounded. For a matrix $\bA$, its operator norm $||\bA||_2$ is defined by the square-root of the largest eigenvalue of $\bA^\top \bA$. $||\bA||_F=\sqrt{\sum_{i,j} a_{ij}^2}$. For a vector $\bx$, $||\bx||_2$ is a 2-norm and, $\bD(\bx)$ is a diagonal matrix having its diagonal entries by $\bx$.

We now describe the main theorem of this paper.
\begin{theorem}\label{thm:HWineq_claim}
	Let $(Z_{1j}, Z_{2j})$, $j=1,\ldots, n$, be independent pairs of (possibly correlated) random variables satisfying $\mathbb{E} Z_{1j} =\mathbb{E} Z_{2j}=0$, and
	$||Z_{ij}||_{\psi_2} \le K_i$, $i=1,2$ and $\forall j$. Also, suppose $(\gamma_{1j}, \gamma_{2j})$, $j=1,\ldots, n$, be independent pairs of (possibly correlated) Bernoulli random variables such that $\mathbb{E}\gamma_{ij} = \pi_{ij}$ and $\mathbb{E}\gamma_{1j} \gamma_{2j} = \pi_{12,j}$ for $j=1, \ldots, n$, $i=1,2$.
	Assume $Z_{ij}$ and $\gamma_{i'j'}$ are independent for distinct pairs $(i,j)\neq (i', j')$. Then, we have that for every $t > 0$
	$$
	\begin{array}{l}
	{\rm P} \bigg[
	\big| (\bZ_1 * \bgamma_1)^\top \bA (\bZ_2 * \bgamma_2) - \mathbb{E}(\bZ_1 * \bgamma_1)^\top \bA (\bZ_2 * \bgamma_2)\big| > t 
	\bigg] \\
	\qquad\qquad\qquad \le 2\exp \left\{
	- c \min \left(\dfrac{t^2}{K_1^2 K_2^2 ||\bA||_{F, \pi}^2 }, \dfrac{t}{K_1 K_2 ||\bA||_2}\right)
	\right\},
	\end{array}
	$$
	for some numerical constant $c>0$. For a matrix $\bA=(a_{ij}, 1\le i,j \le n)$, we define $||\bA||_{F, \pi}  = \sqrt{\sum_{j=1}^n \pi_{12,j} a_{jj}^2 + \sum_{i \neq j}a_{ij}^2 \pi_{1i}\pi_{2j}}$.
\end{theorem}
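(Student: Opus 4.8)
The plan is to bound the moment generating function (MGF) of the centered statistic $S-\mathbb{E}S$, where $S=(\bZ_1*\bgamma_1)^\top \bA (\bZ_2*\bgamma_2)$, and then apply a Chernoff argument to recover the two-regime tail. The obstruction noted in the introduction --- that the entries of $\bar{\bZ}$ are dependent, so Theorem 1 of \cite{Zhou:2019} cannot be applied to the concatenated quadratic form --- is circumvented by conditioning on $\bgamma=(\bgamma_1,\bgamma_2)$ and exploiting the independence of $\bZ$ and $\bgamma$. Given $\bgamma$, we have $S=\bZ_1^\top \bB \bZ_2$ with $\bB=\bD(\bgamma_1)\,\bA\,\bD(\bgamma_2)$, a dense bilinear form in the (pairwise-correlated) sub-Gaussian vectors $\bZ_1,\bZ_2$; the within-pair correlation of $Z_{1j}$ and $Z_{2j}$ is exactly the situation already handled by \cite{Park:2021}. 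Accordingly I would split $S-\mathbb{E}S=\big(S-\mathbb{E}[S\mid\bgamma]\big)+\big(\mathbb{E}[S\mid\bgamma]-\mathbb{E}S\big)$ and treat the two pieces as a $\bZ$-fluctuation (conditionally on $\bgamma$) and a $\bgamma$-fluctuation of the conditional mean.

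For the first piece I would invoke the MGF form underlying the bilinear Hanson--Wright bound of \cite{Park:2021} conditionally on $\bgamma$: there are numerical constants such that $\mathbb{E}_{\bZ}\exp\{\lambda(S-\mathbb{E}[S\mid\bgamma])\}\le \exp\{C\lambda^2 K_1^2 K_2^2\|\bB\|_F^2\}$ whenever $|\lambda|\le c/(K_1 K_2\|\bB\|_2)$. The key simplification is that $\bB$ is $\bA$ pre- and post-multiplied by diagonal $0/1$ matrices, so $\|\bB\|_2\le\|\bA\|_2$ uniformly in $\bgamma$ and the admissible range $|\lambda|\le c/(K_1 K_2\|\bA\|_2)$ is nonrandom. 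Taking the expectation over $\bgamma$ then reduces matters to bounding $\mathbb{E}_{\bgamma}\exp(\theta\|\bB\|_F^2)$ with $\theta=C\lambda^2 K_1^2 K_2^2$, where $\|\bB\|_F^2=\sum_{i,j}\gamma_{1i}\gamma_{2j}a_{ij}^2=\bgamma_1^\top\bA^{(2)}\bgamma_2$ and $\bA^{(2)}=(a_{ij}^2)$. Since $\mathbb{E}\|\bB\|_F^2=\sum_j\pi_{12,j}a_{jj}^2+\sum_{i\neq j}\pi_{1i}\pi_{2j}a_{ij}^2=\|\bA\|_{F,\pi}^2$, the target is to show this Bernoulli bilinear MGF is at most $\exp(C'\theta\|\bA\|_{F,\pi}^2)$.

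This Bernoulli MGF is the main obstacle. I would treat the diagonal sum $\sum_j\gamma_{1j}\gamma_{2j}a_{jj}^2$ (independent across $j$, each valued in $[0,a_{jj}^2]$) directly, and for the off-diagonal part condition successively on $\bgamma_2$ then $\bgamma_1$: conditional on $\bgamma_2$ the terms $\gamma_{1i}\big(\sum_{j\neq i}\gamma_{2j}a_{ij}^2\big)$ are independent in $i$, so the inner MGF factorizes as $\prod_i\{1+\pi_{1i}(e^{\theta d_i}-1)\}$ with $d_i=\sum_{j\neq i}\gamma_{2j}a_{ij}^2$. Here the operator-norm constraint does the work: because $d_i\le\sum_j a_{ij}^2=\|\be_i^\top\bA\|_2^2\le\|\bA\|_2^2$ and $|\lambda|\le c/(K_1 K_2\|\bA\|_2)$, the exponent $\theta d_i$ is bounded by a constant, whence $e^{\theta d_i}-1\le C\theta d_i$ and $1+\pi_{1i}(e^{\theta d_i}-1)\le\exp(C\theta\pi_{1i}d_i)$. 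A second identical linearization in $\bgamma_2$ then yields $\mathbb{E}_{\bgamma}\exp(\theta\|\bB\|_F^2)\le\exp(C'\theta\|\bA\|_{F,\pi}^2)$, i.e. $\mathbb{E}_{\bZ,\bgamma}\exp\{\lambda(S-\mathbb{E}[S\mid\bgamma])\}\le\exp\{C'\lambda^2 K_1^2 K_2^2\|\bA\|_{F,\pi}^2\}$ on the same $\lambda$-range.

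It remains to fold in the conditional-mean fluctuation $\mathbb{E}[S\mid\bgamma]-\mathbb{E}S=\sum_j(\gamma_{1j}\gamma_{2j}-\pi_{12,j})a_{jj}\sigma_{12,j}$, where $\sigma_{12,j}=\mathbb{E}[Z_{1j}Z_{2j}]$ obeys $|\sigma_{12,j}|\lesssim K_1 K_2$. This is a sum of independent, bounded, mean-zero variables with increments $\lesssim|a_{jj}|K_1 K_2\le\|\bA\|_2 K_1 K_2$ and variance proxy $\lesssim K_1^2 K_2^2\sum_j\pi_{12,j}a_{jj}^2$, so its MGF admits a Bernstein bound on the relevant range; combining it with the previous estimate through Cauchy--Schwarz (absorbing constants) gives the master bound $\mathbb{E}\exp\{\lambda(S-\mathbb{E}S)\}\le\exp\{C\lambda^2 K_1^2 K_2^2\|\bA\|_{F,\pi}^2\}$ for all $|\lambda|\le c/(K_1 K_2\|\bA\|_2)$. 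Optimizing the Chernoff bound ${\rm P}(S-\mathbb{E}S>t)\le\exp\{-\lambda t+C\lambda^2 K_1^2 K_2^2\|\bA\|_{F,\pi}^2\}$ over this range produces the Gaussian regime $t^2/(K_1^2 K_2^2\|\bA\|_{F,\pi}^2)$ for small $t$ and the linear regime $t/(K_1 K_2\|\bA\|_2)$ for large $t$; applying the same argument to $-S$ and a union bound yields the stated two-sided inequality. The points I expect to require the most care are the uniform-in-$\bgamma$ linearization of the Bernoulli bilinear MGF --- ensuring the constant-factor losses do not degrade the weighted norm $\|\bA\|_{F,\pi}^2$ --- and confirming that the bilinear result of \cite{Park:2021} is available in MGF form with an operator-norm-type radius of convergence rather than only as a tail bound.
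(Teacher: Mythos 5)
Your architecture (condition on $\bgamma$, apply the dense bilinear Hanson--Wright MGF bound to $\bZ_1^\top \bB \bZ_2$ with $\bB=\bD(\bgamma_1)\bA\bD(\bgamma_2)$, then control $\mathbb{E}_\bgamma\exp(\theta\|\bB\|_F^2)$, plus a Bernstein term for the conditional mean) is genuinely different from the paper's, which instead splits $\bA$ into diagonal and off-diagonal parts and carries $\bgamma$ through a full decoupling chain (auxiliary Bernoulli $\eta$, reduction to Gaussians, integrating out the Gaussians, then the $\gamma$'s). Most of your steps are sound: $\|\bB\|_2\le\|\bA\|_2$ does give a nonrandom $\lambda$-range, $\mathbb{E}\|\bB\|_F^2=\|\bA\|_{F,\pi}^2$, and the conditional-mean piece is a routine Bernstein bound.

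The gap is in the Bernoulli bilinear MGF step, and it sits exactly where the theorem's generality bites. You claim that, conditional on $\bgamma_2$, the off-diagonal MGF factorizes as $\prod_i\{1+\pi_{1i}(e^{\theta d_i}-1)\}$ with $d_i=\sum_{j\neq i}\gamma_{2j}a_{ij}^2$. This presumes $\gamma_{1i}$ is independent of $\bgamma_2$, but the theorem allows $(\gamma_{1i},\gamma_{2i})$ to be correlated within a pair; the correct conditional success probability is $\mathbb{E}[\gamma_{1i}\mid\gamma_{2i}]$, not $\pi_{1i}$. In the extreme case $\gamma_{1i}=\gamma_{2i}$ a.s.\ your formula is simply false (take $n=2$, zero diagonal: conditional on $\bgamma_2$ the sum $W=\gamma_{11}\gamma_{22}a_{12}^2+\gamma_{12}\gamma_{21}a_{21}^2$ is deterministic and equals $\gamma_{21}\gamma_{22}(a_{12}^2+a_{21}^2)$, which your product formula underestimates). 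Replacing $\pi_{1i}$ by $\mathbb{E}[\gamma_{1i}\mid\gamma_{2i}]$ does not rescue the argument either, because after the first linearization the exponent $\sum_{i\neq j}\mathbb{E}[\gamma_{1i}\mid\gamma_{2i}]\,\gamma_{2j}a_{ij}^2$ still couples $\gamma_{2i}$ and $\gamma_{2j}$ in every term, so the ``second identical linearization in $\bgamma_2$'' no longer factorizes over $j$. This is precisely the obstruction the paper removes by decoupling: the auxiliary $\eta$ restricts the sum to $\gamma_{1i}$ with $\eta_i=1$ and $\gamma_{2j}$ with $\eta_j=0$, which live on disjoint pairs and hence are genuinely independent, after which the paper's Lemma \ref{lem:step4} performs exactly the two successive linearizations you describe. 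Your route can be repaired by inserting such a decoupling step (or another device breaking the within-pair dependence) before the factorization; without it, the key intermediate bound $\mathbb{E}_\bgamma\exp(\theta\|\bB\|_F^2)\le\exp(C'\theta\|\bA\|_{F,\pi}^2)$ is not established. The reliance on an MGF-form version of the Park (2021) bilinear bound is a lesser issue --- you flag it yourself, and such a bound is standard --- but it is also not something you can simply cite as stated.
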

\noindent
Note that Theorem \ref{thm:HWineq_claim} is a combination of the results for diagonal and off-diagonal cases given below.
\begin{lemma}\label{lem:HWineq_diag}
	Assume $\bZ_1, \bZ_2, \bgamma_1, \bgamma_2$ are defined as in Theorem \ref{thm:HWineq_claim}. Let $\bA_{\rm diag}=\diag(a_{11}, \ldots, a_{nn})$ be a diagonal matrix. We denote $\mathbb{E} \gamma_{1j} \gamma_{2j} = \pi_{12,j}$. Then, for $t>0$, we have
	$$
	\begin{array}{l}
	{\rm P} \bigg[
	\big| (\bZ_1 * \bgamma_1)^\top \bA_{\rm diag} (\bZ_2 * \bgamma_2) - \mathbb{E}(\bZ_1 * \bgamma_1)^\top \bA_{\rm diag} (\bZ_2 * \bgamma_2)\big| > t 
	\bigg] \\
	\qquad\qquad\qquad \le 2\exp \left\{
	- c 
	\min \left(\dfrac{t^2}{K_1^2 K_2^2
		\sum_{j=1}^n \pi_{12,j} a_{jj}^2}, \dfrac{t}{K_1 K_2 \max\limits_{1\le j \le n} |a_{jj}|}\right)
	\right\},
	\end{array}
	$$
	for some numerical constant $c>0$.
\end{lemma}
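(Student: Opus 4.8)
The plan is to recognize the diagonal bilinear form as a sum of independent sub-exponential random variables and to run a Bernstein-type argument through the moment generating function (MGF), taking care that the variance proxy carries the sparsity factor $\pi_{12,j}$. Writing out the form,
$$
S := (\bZ_1 * \bgamma_1)^\top \bA_{\rm diag}(\bZ_2 * \bgamma_2) = \sum_{j=1}^n a_{jj}\,\gamma_{1j}\gamma_{2j}\,Z_{1j}Z_{2j} = \sum_{j=1}^n a_{jj} W_j Y_j,
$$
where $W_j := \gamma_{1j}\gamma_{2j}$ is Bernoulli with $\mathbb{E} W_j = \pi_{12,j}$ and $Y_j := Z_{1j}Z_{2j}$. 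By the independence across $j$ together with the $\bgamma \perp \bZ$ assumption, the summands $X_j := a_{jj}W_jY_j$ are independent, and within each $j$ we have $W_j \perp Y_j$. Two standard facts will be invoked: the product of sub-Gaussians is sub-exponential, so $\|Y_j\|_{\psi_1}\le K_1K_2$ with $|\mathbb{E} Y_j|\le K_1K_2$; and a sub-exponential variable obeys $\mathbb{E}\exp\{s(Y_j - \mathbb{E} Y_j)\}\le \exp(Cs^2K_1^2K_2^2)$ for $|s|\le c/(K_1K_2)$.

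The main step is the per-coordinate MGF bound. Conditioning on $W_j$ and writing $\mu_j := \mathbb{E} Y_j$, $p_j := \pi_{12,j}$, $s := \lambda a_{jj}$, and $\phi_j(s):=\mathbb{E}\,e^{sY_j}$, one gets
$$
\mathbb{E}\,e^{\lambda(X_j - \mathbb{E} X_j)} = e^{-s p_j \mu_j}\big(1 - p_j + p_j\,\phi_j(s)\big).
$$
I would bound $\phi_j(s)\le \exp(s\mu_j + Cs^2K_1^2K_2^2)=:e^{u}$, apply $1 + p_j(e^{u}-1)\le \exp\{p_j(e^{u}-1)\}$, and then $e^{u}-1\le u + C u^2$ on the range $|s|\le c/(K_1K_2)$. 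The linear part $p_j u$ contributes $p_j s\mu_j$, which cancels exactly against the centering factor $-sp_j\mu_j$; the surviving $p_j C u^2$ is absorbed into $p_j s^2K_1^2K_2^2$ using $|\mu_j|\le K_1K_2$ and $s^2K_1^2K_2^2\le c^2$. This yields
$$
\mathbb{E}\,e^{\lambda(X_j - \mathbb{E} X_j)}\le \exp\big(C\,p_j\,\lambda^2 a_{jj}^2 K_1^2K_2^2\big)\qquad\text{for } |\lambda|\le \frac{c}{|a_{jj}|K_1K_2}.
$$
The essential point, and the place where the sparsity enters, is that the Bernoulli mixture makes the exponent proportional to $p_j=\pi_{12,j}$ rather than to $1$; this is exactly what distinguishes the variance factor $\sum_j \pi_{12,j}a_{jj}^2$ from the naive $\sum_j a_{jj}^2$ one would obtain by ignoring the masking.

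Multiplying the per-coordinate MGFs over $j$ by independence gives
$$
\mathbb{E}\,e^{\lambda(S - \mathbb{E} S)}\le \exp\Big(C\,\lambda^2 K_1^2K_2^2\sum_{j=1}^n \pi_{12,j}a_{jj}^2\Big)\qquad\text{for }|\lambda|\le \frac{c}{K_1K_2\max_j|a_{jj}|},
$$
which is the Bernstein MGF condition with variance proxy $V:=K_1^2K_2^2\sum_j \pi_{12,j}a_{jj}^2$ and scale $b:=K_1K_2\max_j|a_{jj}|$. A Chernoff bound followed by optimizing $\lambda$ over the admissible interval produces the two-regime tail $2\exp\{-c\min(t^2/V, t/b)\}$, where the upper constraint on $\lambda$ is precisely what generates the linear tail with $\max_j|a_{jj}|$; applying the argument to both $S-\mathbb{E} S$ and $\mathbb{E} S - S$ and union-bounding yields the stated factor of $2$. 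I expect the per-coordinate MGF computation — in particular, rigorously tracking that the exponent scales like $\pi_{12,j}$ while keeping uniform control of the cross terms via $|\mu_j|\le K_1K_2$ and the cancellation of the linear term against the centering — to be the main technical obstacle; the remaining steps are the standard independence and Chernoff machinery.
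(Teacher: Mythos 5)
Your proposal is correct and follows essentially the same route as the paper's proof: condition on the Bernoulli mask $\gamma_{1j}\gamma_{2j}$, bound the sub-exponential MGF of $Z_{1j}Z_{2j}$ by a linear-plus-quadratic expression so that the linear term cancels against the centering and the quadratic term picks up the factor $\pi_{12,j}$, then multiply over $j$ and run the Chernoff/Bernstein optimization over $\lambda$ on the restricted range $|\lambda|\lesssim 1/(K_1K_2\max_j|a_{jj}|)$. The only cosmetic difference is that you pass through $\phi_j(s)\le e^{u}$ and $e^{u}-1\le u+Cu^2$, whereas the paper bounds $\phi_j(s)-1$ directly via its Lemma \ref{lem:subexp_mgf}; these are equivalent.
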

\begin{lemma}\label{lem:HWineq_offdiag}
	Assume $\bZ_1, \bZ_2, \bgamma_1, \bgamma_2$ are defined as in Theorem \ref{thm:HWineq_claim}. Let $\bA_{\rm off}$ be a $n\times n$ matrix with its diagonals zero.  We denote $\mathbb{E} \gamma_{ij} = \pi_{ij}$ for $i=1,2$ and $j=1,\ldots, n$. Then, for $t>0$, we have
	$$
	\begin{array}{l}
	{\rm P} \bigg[
	\big| 
	(\bZ_1 * \bgamma_1)^\top \bA_{\rm off} (\bZ_2 * \bgamma_2) \big| > t 
	\bigg] \\
	\qquad\qquad\qquad  \le 2\exp \left\{
	- c \min
	\left(\dfrac{t^2}{K_1^2 K_2^2 
		\sum_{i \neq j}a_{ij}^2 \pi_{1i}\pi_{2j}
	}, \dfrac{t}{K_1 K_2 || \bA_{\rm off}||_2}\right)
	\right\},
	\end{array}
	$$
	for some numerical constant $c>0$.
\end{lemma}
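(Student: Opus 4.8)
The plan is to bound the moment generating function (MGF) of
$$
S:=(\bZ_1*\bgamma_1)^\top\bA_{\rm off}(\bZ_2*\bgamma_2)=\sum_{i\neq j}a_{ij}V_{1i}V_{2j},\qquad V_{1i}:=Z_{1i}\gamma_{1i},\ \ V_{2j}:=Z_{2j}\gamma_{2j},
$$
and then apply Markov's inequality optimized over the exponential parameter. Since $\bA_{\rm off}$ has zero diagonal, and for $i\neq j$ the variables $V_{1i},V_{2j}$ come from distinct (hence independent) pairs with $\mathbb{E}V_{1i}=\mathbb{E}V_{2j}=0$, we have $\mathbb{E}S=0$, so no centering is needed. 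The essential structural observation is that the ``pair variables'' $\xi_j=(Z_{1j},Z_{2j},\gamma_{1j},\gamma_{2j})$ are independent across $j$, and $V_{1i},V_{2j}$ are (possibly dependent) functions of $\xi_i,\xi_j$. Thus $S$ is an off-diagonal bilinear form in functions of independent variables, and a standard decoupling inequality (valid for convex functions such as the MGF) lets me replace the right factors by an independent copy, reducing to $S'=\sum_{i\neq j}a_{ij}V_{1i}V'_{2j}$ with $V'_{2j}=Z'_{2j}\gamma'_{2j}$, in which the two groups are now independent of each other, at the cost of absolute constants. This is exactly the step that neutralizes the within-pair correlation, which is legitimate precisely because it lives inside a single independent $\xi_j$.

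The $\pi$-weighting of the Frobenius term then emerges from a refined one-dimensional MGF bound for the masked variables. Because $\gamma'_{2j}\sim\mathrm{Bernoulli}(\pi_{2j})$ is independent of the mean-zero $Z'_{2j}$ with $\|Z'_{2j}\|_{\psi_2}\le K_2$,
$$
\mathbb{E}\exp(sV'_{2j})=1-\pi_{2j}+\pi_{2j}\,\mathbb{E}\exp(sZ'_{2j})\le\exp\!\big(\pi_{2j}(e^{CK_2^2s^2}-1)\big)\le\exp\!\big(C'\pi_{2j}K_2^2s^2\big)
$$
for $|s|\le c/K_2$, so the effective variance proxy of $V'_{2j}$ carries the factor $\pi_{2j}$. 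Conditioning on the first group and writing $w_j=\sum_{i\neq j}a_{ij}V_{1i}=\ba_j^\top\bV_1$ (with $\ba_j$ the $j$-th column of $\bA_{\rm off}$), independence of $V'_{2j}$ across $j$ and from the first group gives
$$
\mathbb{E}\big[\exp(\lambda S')\mid\bZ_1,\bgamma_1\big]\le\exp\!\Big(C'K_2^2\lambda^2\sum_j\pi_{2j}w_j^2\Big),\qquad\max_j|\lambda w_j|\le c/K_2.
$$

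It remains to take expectation over the first group, where the exponent is the quadratic form $\sum_j\pi_{2j}w_j^2=\bV_1^\top\bM\bV_1$ with $\bM=\sum_j\pi_{2j}\ba_j\ba_j^\top\succeq\bzero$. Two facts drive the conclusion. First, $\bM\preceq\sum_j\ba_j\ba_j^\top=\bA_{\rm off}\bA_{\rm off}^\top$ yields the deterministic bound $\|\bM\|_2\le\|\bA_{\rm off}\|_2^2$, which fixes the sub-exponential scale. Second, applying the quadratic Hanson--Wright MGF estimate to the independent mean-zero entries $V_{1i}$ gives a leading term equal to the true mean,
$$
\mathbb{E}\big[\bV_1^\top\bM\bV_1\big]=\sum_i M_{ii}\,\mathbb{E}V_{1i}^2=\sum_i\Big(\sum_j\pi_{2j}a_{ij}^2\Big)\sigma_{1i}^2\pi_{1i}\le CK_1^2\sum_{i\neq j}a_{ij}^2\pi_{1i}\pi_{2j},
$$
where $\mathbb{E}V_{1i}^2=\sigma_{1i}^2\pi_{1i}$ supplies the missing $\pi_{1i}$ factor; the fluctuation term is of higher order in $\lambda^2$. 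Putting $\theta=C'K_2^2\lambda^2$ and using $\|\bM\|_2\le\|\bA_{\rm off}\|_2^2$, this produces the unconditional bound $\mathbb{E}\exp(\lambda S')\le\exp\!\big(C''K_1^2K_2^2\lambda^2\sum_{i\neq j}a_{ij}^2\pi_{1i}\pi_{2j}\big)$ valid on the range $|\lambda|\lesssim 1/(K_1K_2\|\bA_{\rm off}\|_2)$.

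The step I expect to be the main obstacle is the two-regime bookkeeping that glues these estimates together. The refined $\pi$-weighted MGF bounds---both the one-dimensional bound for $V'_{2j}$ and the quadratic-form bound for $\bV_1^\top\bM\bV_1$---are only valid in the sub-Gaussian range, i.e. for small $\lambda w_j$ and for $\theta$ bounded away from $1/(K_1^2\|\bM\|_2)$. Controlling the complementary regime (atypically large $w_j$, or $\theta$ approaching the boundary) without losing the $\pi$-weighting is precisely what generates the sub-exponential term $t/(K_1K_2\|\bA_{\rm off}\|_2)$; I would handle it through the operator-norm bound $\|\bM\|_2\le\|\bA_{\rm off}\|_2^2$ together with a truncation of the large $w_j$, verifying that the resulting contributions feed only into the sub-exponential tail and that the higher-order fluctuation terms remain dominated by the mean term over the admissible $\lambda$-range. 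Granting the unconditional MGF bound, a standard optimization of $\lambda$ in Markov's inequality yields the stated minimum of a sub-Gaussian and a sub-exponential tail, completing the proof.
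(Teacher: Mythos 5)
Your high-level architecture (decouple, bound the conditional MGF of the second group with a Bernoulli-aware estimate to extract $\pi_{2j}$, then integrate out the first group) parallels the paper's, which instead decouples by an auxiliary random split $\eta$ and passes through a Gaussian randomization before linearizing the Bernoulli MGFs. However, there is a genuine gap at the step you dismiss as higher-order: integrating out the first group. You apply the generic sub-Gaussian Hanson--Wright MGF estimate to $\bV_1^\top\bM\bV_1$ with $\bM=\sum_j\pi_{2j}\ba_j\ba_j^\top$ and keep only the mean term $\sum_i M_{ii}\,\mathbb{E}V_{1i}^2\lesssim K_1^2\sum_{i\neq j}a_{ij}^2\pi_{1i}\pi_{2j}$. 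But the accompanying fluctuation term is of order $\theta^2K_1^4\|\bM\|_F^2$ with $\theta=C'K_2^2\lambda^2$, and over the admissible range $\lambda^2\lesssim 1/(K_1^2K_2^2\|\bA_{\rm off}\|_2^2)$ one only gets $\theta^2K_1^4\|\bM\|_F^2\le\theta^2K_1^4\|\bM\|_2\,\tr(\bM)\lesssim\theta K_1^2\sum_{i,j}a_{ij}^2\pi_{2j}$ --- a quantity carrying no $\pi_{1i}$ factor. When the $\pi_{1i}$ are small this dominates the mean term, so the claimed unconditional bound $\exp\big(C''K_1^2K_2^2\lambda^2\sum_{i\neq j}a_{ij}^2\pi_{1i}\pi_{2j}\big)$ does not follow. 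The $\pi_{1i}$-weighting cannot be recovered from the $\psi_2$-norm of $V_{1i}=Z_{1i}\gamma_{1i}$ (which is of order $K_1$ regardless of $\pi_{1i}$); it must be extracted, as for the second group, from the Bernoulli structure of $\gamma_{1i}$ itself. This is exactly what the paper's Step 4 (Lemma \ref{lem:step4}) does: after reducing the exponent to $\tau\sum_i\gamma_{1i}\sum_j a_{ij}^2\gamma_{2j}$, it conditions and uses the linearization $1-\pi+\pi e^x\le e^{1.2\pi x}$ for $0\le x\le 0.35$, which is guaranteed deterministically by $\tau\le 1/(4\|\bA\|_2^2)$ since $\sum_j a_{ij}^2\le\|\bA\|_2^2$.

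A second, related gap: your $\pi_{2j}$-weighted bound $\mathbb{E}\exp(sV_{2j}')\le\exp(C'\pi_{2j}K_2^2s^2)$ requires $|s|=|\lambda w_j|\le c/K_2$, a \emph{random} event in $w_j=\ba_j^\top\bV_1$; you flag the truncation needed on its complement but do not carry it out, and on that complement only the unweighted sub-Gaussian bound is available, which again threatens the $\pi$-weighting of the sub-Gaussian tail. The paper sidesteps both issues by first bounding the conditional MGF by $\exp(C'\lambda^2\sigma_{\eta,\gamma}^2)$ (valid for all $\lambda$), replacing the remaining sub-Gaussians by Gaussians, and integrating those out exactly, so that every subsequent restriction on $\lambda$ is deterministic in $\|\bA\|_2$. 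I would restructure your argument along those lines rather than attempting to patch the truncation.
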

\noindent
A complete proof of the two lemmas, in spirit of \cite{Zhou:2019}, is provided in Appendix. It is noted that our theorem above can yield Theorem 1.1 in \cite{Zhou:2019} and Lemma 5 in \cite{Park:2021} under the same setting of each.

To handle more general case where we do not have information about mean, we provide the concentration inequality for non-centered sub-Gaussian variables.
\begin{theorem}\label{thm:HWineq_claim_nzm}
	Let $(\tilde{Z}_{1j}, \tilde{Z}_{2j})$, $j=1,\ldots, n$, be independent pairs of (possibly correlated) random variables with $\mathbb{E} \tilde{Z}_{1i}=\mu_{1i}$, $\mathbb{E} \tilde{Z}_{2j}=\mu_{2j}$, and
	$||\tilde{Z}_{ij} - \mu_{ij} ||_{\psi_2} \le K_i$, $i=1,2$ and $\forall j$. Also, suppose $(\gamma_{1j}, \gamma_{2j})$, $j=1,\ldots, n$, be independent pairs of (possibly correlated) Bernoulli random variables such that $\mathbb{E}\gamma_{ij} = \pi_{ij}$ and $\mathbb{E}\gamma_{1j} \gamma_{2j} = \pi_{12,j}$ for $j=1, \ldots, n$, $i=1,2$.
	Assume $\tilde{Z}_{ij}$ and $\gamma_{i'j'}$ are independent for distinct pairs $(i,j)\neq (i', j')$. Then, we have that for every $t > 0$
	$$
	\begin{array}{l}
	{\rm P} \bigg[
	\big| (\tilde{\bZ}_1 * \bgamma_1)^\top \bA (\tilde{\bZ}_2 * \bgamma_2) - \mathbb{E}(\tilde{\bZ}_1 * \bgamma_1)^\top \bA (\tilde{\bZ}_2 * \bgamma_2)\big| > t 
	\bigg] \le d\exp \left\{
	- c \min \left(\dfrac{t^2}{E_1}, \dfrac{t}{E_2}\right)
	\right\},
	\end{array}
	$$
	for some numerical constants $c,d>0$. $E_1$ and $E_2$ are defined by
	$$
	\begin{array}{lll}
	E_1 = \max\Bigg\{ & K_1^2 K_2^2 ||\bA||_{F, \pi}^2, & V_{1,\pi}^2 V_{2,\pi}^2||\bD(\bmu_1)\bA \bD(\bmu_2)||_{F}^2, \\
	& V_{1,\pi}^2 K_2^2 ||\bD(\bmu_1 * \bmu_1)^{1/2}\bA \bD(\bpi_2)^{1/2}||_{F}^2, &
	V_{2,\pi}^2 K_1^2 ||\bD(\bpi_1)^{1/2}\bA \bD(\bmu_2 * \bmu_2)^{1/2}||_{F}^2,\\
	& K_2^2 ||\bA^\top (\bmu_1 * \bpi_1)||_2^2, &
	K_1^2 ||\bA(\bmu_2 * \bpi_2)||_2^2,\\
	& V_{1,\pi}^2 ||\{\bA(\bmu_2 * \bpi_2)\} * \bmu_1||_{2}^2, &
	V_{2,\pi}^2 ||\{\bA(\bmu_1 * \bpi_1)\} * \bmu_2||_{2}^2\Bigg\},\\[1em]
	E_2 = \max\Bigg\{ & K_1 K_2 ||\bA||_2,  &
	V_{1,\pi} V_{2,\pi}||\bD(\bmu_1)\bA \bD(\bmu_2)||_2,\\
	& V_{1,\pi} K_2 ||\bD(\bmu_1)\bA ||_2, &
	V_{2,\pi} K_1 ||\bA \bD(\bmu_2) ||_2 \Bigg\}.
	\end{array}
	$$
	For a matrix $\bA=(a_{ij}, 1\le i,j \le n)$, we define $||\bA||_{F, \pi}  = \sqrt{\sum_{j=1}^n \pi_{12,j} a_{jj}^2 + \sum_{i \neq j}a_{ij}^2 \pi_{1i}\pi_{2j}}$.
	Also, we define $V_{1,\pi}=\max\limits_{1\le i \le n} \pi_{1i}(1-\pi_{1i})$ and $V_{2,\pi}$ similarly.
\end{theorem}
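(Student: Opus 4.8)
The plan is to reduce the non-centered problem to the centered Theorem~\ref{thm:HWineq_claim} by peeling off the means. Writing $\tilde{\bZ}_i = \bZ_i + \bmu_i$ with $Z_{ij} = \tilde{Z}_{ij}-\mu_{ij}$ centered and $\|Z_{ij}\|_{\psi_2}\le K_i$, and expanding the bilinear form by linearity, I obtain
$$(\tilde{\bZ}_1 * \bgamma_1)^\top \bA (\tilde{\bZ}_2 * \bgamma_2) = \text{I}+\text{II}+\text{III}+\text{IV},$$
where $\text{I}=(\bZ_1 * \bgamma_1)^\top \bA (\bZ_2 * \bgamma_2)$, $\text{II}=(\bZ_1 * \bgamma_1)^\top \bA (\bmu_2 * \bgamma_2)$, $\text{III}=(\bmu_1 * \bgamma_1)^\top \bA (\bZ_2 * \bgamma_2)$, and $\text{IV}=(\bmu_1 * \bgamma_1)^\top \bA (\bmu_2 * \bgamma_2)$. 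Since each $\bZ_i$ is centered and independent of all selection variables, $\mathbb{E}\,\text{II}=\mathbb{E}\,\text{III}=0$, so the mean lives only in I and IV. The deviation $\text{I}-\mathbb{E}\,\text{I}$ is exactly the object of Theorem~\ref{thm:HWineq_claim} and supplies the entries $K_1^2K_2^2\|\bA\|_{F,\pi}^2$ and $K_1K_2\|\bA\|_2$ of $E_1$ and $E_2$. The remaining work is to bound each of II, III, and $\text{IV}-\mathbb{E}\,\text{IV}$ and then combine.

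For the cross terms I would substitute $\bgamma_i=\bpi_i+\bdelta_i$, with $\delta_{ij}=\gamma_{ij}-\pi_{ij}$ bounded by $1$ and of variance $\pi_{ij}(1-\pi_{ij})\le V_{i,\pi}$, and then condition on the selection variables. The leading piece of $\text{II}$ is a linear form $\bZ_1^\top\bD(\bpi_1)\bA(\bmu_2 * \bpi_2)$ in the independent centered sub-Gaussian $\bZ_1$, hence sub-Gaussian with proxy bounded by $K_1^2\|\bA(\bmu_2 * \bpi_2)\|_2^2$; the remaining pieces carry a factor $\bdelta_2$ or $\bdelta_1$, and conditioning on the Bernoulli variables turns them into sub-Gaussian linear forms in $\bZ_1$ whose random variance proxies concentrate, via the bounded-variable bound below, around quantities such as $V_{2,\pi}^2K_1^2\|\bD(\bpi_1)^{1/2}\bA\bD(\bmu_2 * \bmu_2)^{1/2}\|_F^2$. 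Term III contributes the companion entries with the roles of indices $1$ and $2$ interchanged.

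For the pure-selection term I would use the same centering to rewrite $\text{IV}-\mathbb{E}\,\text{IV}$ as a sum of a bilinear form $\bdelta_1^\top\big[\bD(\bmu_1)\bA\bD(\bmu_2)\big]\bdelta_2$ and two linear forms in $\bdelta_1,\bdelta_2$. Because the $\delta_{ij}$ are centered and bounded they satisfy $\mathbb{E}|\delta_{ij}|^p\le\mathbb{E}\delta_{ij}^2\le V_{i,\pi}$ for all $p\ge 2$, so a Hanson--Wright/Bernstein-type inequality for bounded variables applies with the \emph{true} variances $V_{i,\pi}$ governing the quadratic part; this step is responsible for the entries $V_{1,\pi}^2V_{2,\pi}^2\|\bD(\bmu_1)\bA\bD(\bmu_2)\|_F^2$ and $V_{1,\pi}V_{2,\pi}\|\bD(\bmu_1)\bA\bD(\bmu_2)\|_2$, while the two linear forms give $V_{1,\pi}^2\|\{\bA(\bmu_2 * \bpi_2)\} * \bmu_1\|_2^2$ and its transpose companion. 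Finally I would combine the four (sub)deviations by the triangle inequality and a union bound: each piece satisfies a tail of the form $2\exp\{-c\min(t^2/v,\,t/b)\}$ with $v,b$ among the quantities collected above, so taking $E_1=\max v$, $E_2=\max b$, rescaling $t$ by the (constant) number of pieces, and absorbing this count into the prefactor $d$ yields the stated inequality.

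The step I expect to be the main obstacle is the treatment of the cross terms II and III. After conditioning, their variance proxy $K_1^2\|\bgamma_1 * (\bA\bD(\bmu_2)\bgamma_2)\|_2^2$ is random and couples $\bgamma_1$ to $\bgamma_2$, so no fixed-proxy sub-Gaussian bound applies directly; one must disentangle its deterministic part from its fluctuation, verify that every resulting variance and operator-norm contribution is dominated by one of the listed $E_1$ and $E_2$ entries, and do so while keeping the factors $V_{i,\pi}$ sharp rather than replacing them by the crude boundedness constant. Matching this double-layer (sub-Gaussian over Bernoulli) deviation against the explicit max-expressions is the book-keeping-heavy heart of the argument.
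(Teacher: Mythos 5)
Your decomposition is essentially the paper's: after centering both the sub-Gaussian variables and the Bernoulli variables, your terms I--IV expand into exactly the eight summands $S_{1,1},\ldots,S_{4,3}$ of the paper's proof in Appendix~\ref{app:proof_HWineq_nzm}, which likewise handles each bilinear piece by Theorem~\ref{thm:HWineq_claim} (treating $\gamma_{ij}-\pi_{ij}$ as a centered sub-Gaussian with $\psi_2$-norm at most $1$ and selection indicator identically $1$) and each linear piece by Hoeffding's inequality (Theorem~\ref{thm:hoeffing}), before combining with a union bound. The conditioning difficulty you flag for the cross terms is avoided in the paper's route because the centered Bernoulli factor is simply absorbed as one of the two sub-Gaussian arguments of Theorem~\ref{thm:HWineq_claim} rather than being conditioned on.
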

\noindent
The bilinear form of $\tilde{Z}_{ij}$ in Theorem \ref{thm:HWineq_claim_nzm} can be decomposed into the bilinear forms and the linear combinations of centered random variables. Then, we can apply either Theorem \ref{thm:HWineq_claim} or Hoeffding's inequality (Theorem \ref{thm:hoeffing} in Appendix) to them. The detail of the proof can be found in Appendix \ref{app:proof_HWineq_nzm}.

%
%
%

\section{Application to testing the cross-covariance matrix}

A cross-covariance matrix $\bSigma_{XY}$ between two random vectors $\bX=(X_1, \ldots, X_p)^\top$ and $\bY=(Y_1, \ldots, Y_q)^\top$, with its $(k,\ell)$-th entry being $\sigma_{k\ell}^{XY}=\text{Cov}(X_k, Y_\ell)$, refers to the off-diagonal block matrix of the covariance matrix of $\bZ=\big({\bX}^{\top}, {\bY}^{\top} \big)^{\top}$. It is often considered a less important part in the covariance matrix of $\bZ$, $\bSigma_{ZZ}$, and tends to be overpenalized by shrinkage estimators favoring an invertible estimate. 
However, it is a crucial statistical summary in some applications. For example, 
the study of multi-omics data, which aims to explain molecular variations 
at different molecular levels, receives much attention due to 
the public availability of biological big data and the covariation between two different data sources is just as important as that within each data source. 
The main question here is to find pairs (or positions in the matrix) of $X_k$'s and $Y_\ell$'s that present a large degree of association, which can be treated by hypothesis testing:
\begin{equation}\label{eq:global_hypo}
	\mathcal{H}_{0, k\ell}: \sigma_{k\ell}^{XY}=0 \quad \text{vs.} \quad 
	\mathcal{H}_{1, k\ell}: \text{not } \mathcal{H}_{0, k\ell},
\end{equation}
for $1\le k \le p, 1 \le \ell \le q$.

Testing the cross-covariance matrix has not been much explored in literature. 
\cite{Cai:2019} directly address the problem of estimating the cross-covariance matrix. However, they vaguely assume $q \ll p$ and consider simultaneous testing of hypotheses
$$
\mathcal{H}_{0k}: \sigma_{k1}^{XY}=\ldots=\sigma_{kq}^{XY}=0, \quad \text{vs.} \quad 
\mathcal{H}_{1k}: \text{not } \mathcal{H}_{0k}
$$
for $k=1, \ldots, p$. They build Hotelling's $T^2$ statistics for individual hypotheses and decide which rows of $\bSigma_{XY}$ are not 0. Hence, the sparsity pattern in \cite{Cai:2019} is not the same as considered in this paper. Moreover, their method cannot address missing data.

Considering (\ref{eq:global_hypo}) is equivalent to $\mathcal{H}_{0, k\ell}: \rho_{k\ell}^{XY}=0$ where $ \rho_{k\ell}^{XY}=\text{Cor}(X_k, Y_\ell)$, one can also analyze the sample correlation coefficient $\gamma_{k\ell}$. \cite{Bailey:2019} analyzed a universal thresholding estimator of $\gamma_{k\ell}$ based on its asymptotic normality. Though they are interested in estimation of a large correlation matrix, not a cross-correlation matrix directly, their method can be applied to estimation of the cross-covariance matrix. For example, the proposed estimator would be a $p\times q$ matrix with its component $\gamma_{k\ell} 1_{(|\gamma_{k\ell}| > c(n,p,q))}$, and they aim to find a cutoff value $c(n,p,q)$ to control the family-wise error rate (FWER).
However, again, if data are subject to missing, their method is no longer valid.

Here, we address the multiple testing problem for the cross-covariance matrix when some of the data are missing or measured with errors. We apply the modified Hanson-Wright inequalities (Theorem \ref{thm:HWineq_claim}, \ref{thm:HWineq_claim_nzm}, \ref{thm:HWineq_msr_error}, \ref{thm:HWineq_msr_error_nzm}) in order to choose a threshold value that controls FWER. More specifically, we derive concentration results for an appropriate cross-covariance estimator $\hat{\sigma}_{k\ell}^{XY}$ in the following form: 
$$
{\rm P} \left[
\max\limits_{k, \ell} \big| \hat{\sigma}_{k\ell}^{XY} - \sigma_{k\ell}^{XY}\big| > c(n,p,q)
\, \right] \le \alpha, \quad 0 < \alpha <1
$$
under some regularity conditions. We begin with the simplest case where data are completely observed and walk through more complicated cases later.

\subsection{Complete data case}\label{sec:complete_case}
%
We begin with the complete data case. 

\begin{assumption}\label{assump:X_and_Y}
	Assume each components of random vectors $\bX\in\mathbb{R}^p$ and $\bY\in\mathbb{R}^q$ are sub-Gaussian, i.e., it holds for some $K_X, K_Y>0$ 
	\begin{equation}\label{eq:subgaussian}
		\max_{1 \le k \le p} ||X_k - \mathbb{E}X_k||_{\psi_2} \le K_X, \quad
		\max_{1 \le \ell \le q}||Y_\ell - \mathbb{E}Y_\ell||_{\psi_2} \le K_Y.
	\end{equation}
	Let us denote the mean vector and cross-covariance matrix of $\bX$ and $\bY$ as follows:
	\begin{equation}\label{eq:XY_moment}
		\begin{array}{l}
			\mathbb{E}\bX = \bmu^X = (\mu_k^X, 1\le k \le p)^\top, \\
			\mathbb{E}\bY = \bmu^Y  = (\mu_\ell^Y, 1\le \ell \le q)^\top,\\
			\text{Cov}(\bX, \bY) = \bSigma_{XY} = (\sigma_{k\ell}^{XY}, 1\le k \le p, 1 \le \ell \le q).
		\end{array}
	\end{equation}
\end{assumption}

Suppose we observe $n$ independent samples $\{(\bX_i, \bY_i)\}_{i=1}^n$ of $(\bX, \bY)$ in Assumption \ref{assump:X_and_Y}.
Then, the cross-covariance $\sigma_{k\ell}^{XY}$ can be estimated by 
$$
\begin{array}{rcl}
s_{k\ell} &=& \dfrac{1}{n-1}\sum\limits_{i=1}^n (X_{ik} - \bar{X}_k)(Y_{i\ell} - \bar{Y}_\ell).
\end{array}
$$
where $\bar{X}_k$, $\bar{Y}_\ell$ are the sample means corresponding to $\mu_k^X$, $\mu_\ell^Y$.
We can analyze the concentration of each component of $\bS_{XY} = (s_{k\ell}, 1\le k \le p, 1 \le \ell \le q)$ using Theorem \ref{thm:HWineq_claim} as its special case where all $\pi$'s are 1. 
We first notice that
$$
s_{k\ell} = \dfrac{1}{n}\sum\limits_{i=1}^n (X_{ik} - \mu_k^X)(Y_{i\ell} - \mu_\ell^Y) - \dfrac{1}{n(n-1)} \sum\limits_{i\neq j} (X_{ik}- \mu_k^X)(Y_{j\ell}- \mu_\ell^Y),
$$
where $\mu_k^X = \mathbb{E} X_{ik}$ and $\mu_\ell^Y = \mathbb{E} Y_{j\ell}$. 
Hence, we can rewrite the sample cross-covariance estimator in a matrix-form by
$$
s_{k\ell} = \bZ_{1(k)}^\top \bA \bZ_{2(\ell)}
$$
where $\bZ_{1(k)}=(X_{1k} - \mu_k^X, \ldots, X_{nk} - \mu_k^X)^\top, \bZ_{2(\ell)}=(Y_{1\ell}-\mu_\ell^Y, \ldots, Y_{n\ell}-\mu_\ell^Y)^\top$, and $\bA = \{n(n-1)\}^{-1} (n \bI - \bone \bone^\top)$. Note that $||\bA||_F = 1/\sqrt{n-1}$ and $||\bA||_2 = 1/(n-1)$.
Then, by Theorem \ref{thm:HWineq_claim}, the element-wise deviation inequality for the sample cross-covariance is
$$
{\rm P} \bigg[
\big| s_{k\ell} - \sigma_{k\ell}^{XY}\big| > t 
\bigg] \le 2\exp \left\{
-  \dfrac{c_1(n-1)t^2}{K_X^2 K_Y^2
}
\right\}, \quad \dfrac{t}{K_X K_Y} < 1,
$$
for some numerical constant $c_1>0$. Putting $t =K_X K_Y \sqrt{\log (2pq/\alpha)/\{c_1(n-1)\}}$ and using the union bounds, we can get 
$$
{\rm P} \left[
\max\limits_{k, \ell} \big| s_{k\ell} - \sigma_{k\ell}^{XY}\big| > C_1 K_X K_Y \sqrt{\dfrac{\log(pq/\alpha) }{n-1}}
\, \right] \le \alpha,
$$
if $n / \log(pq/\alpha) > d_1$ for some numerical constants $C_1, d_1>0$.

\subsection{Missing data case}\label{sec:missing_case}
For the case where data are subject to missing, we introduce assumptions for missing indicators. 
\begin{assumption}\label{assump:miss_ind}
	Each component $\delta_k^X$ of the indicator vector $\bdelta^X = (\delta_k^X, 1\le k \le p)^\top$ corresponding to $\bX$ is 1 if $X_k$ is observed and 0 otherwise. $\bdelta^Y$ is similarly defined. Their moments are given by
	$$
	\begin{array}{l}
	\mathbb{E}\bdelta^X = \bpi^X = (\pi_k^X, 1\le k \le p)^\top, \\
	\mathbb{E}\bdelta^Y = \bpi^Y  = (\pi_\ell^Y, 1\le \ell \le q)^\top,\\
	\mathbb{E} \bdelta^X (\bdelta^Y)^\top = \bpi^{XY} = (\pi_{k\ell}, 1\le k \le p, 1 \le \ell \le q).
	\end{array}
	$$
\end{assumption}
\noindent
Note that the above assumption does not mention independence between components of the indicator vector, which means it allows $\delta_k^X$ and $\delta_\ell^X$, $k\neq \ell$, to be dependent with each other. This implies multiple components in different positions can be missing together under some dependency structure. Assumption 2 of \cite{Park:2021} is equivalent to this, and they called it the general missing dependency assumption. For the missing mechanism, missing completely at random (MCAR) is assumed, that is, $(\bX, \bY)$ is independent of $(\bdelta^X, \bdelta^Y)$. More generally, MCAR can be stated as follows.
\begin{assumption}[Assumption 3 of \cite{Park:2021}]\label{assump:MCAR}
	An event that an observation is missing is independent of both observed and unobserved random variables.
\end{assumption}

Suppose $n$ independent samples are generated from the population model under Assumption \ref{assump:X_and_Y}, \ref{assump:miss_ind}, and \ref{assump:MCAR}. Each sample consists of the observational data $(\bX_i, \bY_i)$ and their missing indicators $(\bdelta_i^X, \bdelta_i^Y)$. However, due to missingness, we can only observe 
$\tilde{X}_{ik} = \delta_{ik}^X X_{ik}$, 
$\tilde{Y}_{j\ell} = \delta_{j\ell}^Y Y_{j\ell}$, for $i,j=1,\ldots, n$, $k=1,\ldots, p$, and $\ell= 1, \ldots, q$. We can easily check that 
$$
\mathbb{E}\big[\sum\limits_{i=1}^n \tilde{X}_{ik} \tilde{Y}_{i\ell} \big] = n \pi_{k \ell} (\sigma_{k\ell}^{XY} + \mu_k^X \mu_\ell^Y), \quad \mathbb{E}\big[\sum\limits_{i\neq j}^n \tilde{X}_{ik} \tilde{Y}_{j\ell} \big] = n(n-1) \pi_k^X\pi_{\ell}^Y \mu_k^X \mu_\ell^Y.
$$
From the above observation, we define an estimator of the cross-covariance as follows, which is unbiased for $\sigma_{k\ell}^{XY}$.
\begin{equation}\label{eq:IPWest_unknown_mean}
	\tilde{s}_{k\ell} = \dfrac{\sum_{i=1}^n \tilde{X}_{ik} \tilde{Y}_{i\ell}}{n \pi_{k \ell}} - \dfrac{\sum_{i\neq j}^n \tilde{X}_{ik} \tilde{Y}_{j\ell}}{n(n-1) \pi_k^X \pi_{\ell}^Y} = (\tilde{\bZ}_{1(k)} * \bdelta_{1(k)})^\top \bA_{k\ell} (\tilde{\bZ}_{2(\ell)} * \bdelta_{2(\ell)}).
\end{equation}
The last representation is a bilinear form of $\tilde{\bZ}_{1(k)}, \tilde{\bZ}_{2(k)}, \bdelta_{1(k)}, \bdelta_{2(k)}$, and $\bA_{k\ell}$  defined as below.
$$
\begin{array}{l}
\tilde{\bZ}_{1(k)}=({X}_{1k}, \ldots, {X}_{nk})^\top, \tilde{\bZ}_{2(\ell)}=({Y}_{1\ell}, \ldots, {Y}_{n\ell})^\top,\\
\bdelta_{1(k)}=(\delta_{1k}^X, \ldots, \delta_{nk}^X)^\top, \bdelta_{2(\ell)}=(\delta_{1\ell}^Y, \ldots, \delta_{n\ell}^Y)^\top,\\
\bA_{k\ell} = \left(
\dfrac{1}{n\pi_{k \ell}} + 
\dfrac{1}{n(n-1)\pi_k^X \pi_\ell^Y}
\right)\bI - 
\dfrac{1}{n(n-1)\pi_k^X \pi_\ell^Y} \bone \bone^\top.
\end{array}
$$
Thus, we apply Theorem \ref{thm:HWineq_claim_nzm} to $\tilde{s}_{k\ell}$ and get, for $t < E_{1,k\ell} / E_{2,k\ell} $
$$
{\rm P} \bigg[
\big| \tilde{s}_{k\ell} - \sigma_{k\ell}^{XY}\big| > t 
\bigg] \le d \exp \left\{
- \dfrac{c_2 t^2}{E_{1,k\ell}}
\right\},
$$
where $c_2, d>0$ are some numerical constants and $E_{1,k\ell}, E_{2,k\ell}$ are defined below.
$$
\begin{array}{l}
E_{1,k\ell} = \max\bigg[
\max\Big\{K_X^2 K_Y^2, K_X^2 |\mu_\ell^Y|^2, |\mu_k^X|^2 K_Y^2,  |\mu_k^X|^2 |\mu_\ell^Y|^2 \Big\} \left(\dfrac{1}{n\pi_{k \ell}^2} + \dfrac{1}{n(n-1)(\pi_k^X)^2 (\pi_\ell^Y)^2}\right),\\
\qquad\qquad
\max\Big\{ (K_X)^2 |\mu_\ell^Y|^2, |\mu_k^X|^2 (K_Y)^2,  |\mu_k^X|^4, |\mu_\ell^Y|^4 \Big\} \dfrac{1}{n}\left(\dfrac{1}{\pi_{k \ell}} - \dfrac{1}{\pi_k^X \pi_\ell^Y}\right)^2
\bigg],\\
E_{2,k\ell} = \max\Big\{K_X K_Y, K_X |\mu_\ell^Y|, |\mu_k^X| K_Y,  |\mu_k^X| |\mu_\ell^Y| \Big\} \left(\dfrac{1}{n\pi_{k \ell}} + \dfrac{1}{n(n-1)\pi_k^X \pi_\ell^Y}\right)
\end{array}
$$

Putting $t = \sqrt{E_{1,k\ell} \log (dpq/\alpha)/ c_2}$ and using the union bound argument to the indices $(k,\ell)$, we can get for some numerical constants $C_2, d_2 >0$
$$
{\rm P} \left[
\max\limits_{k, \ell} \big| \tilde{s}_{k\ell} - \sigma_{k\ell}^{XY}\big| > C_2 \sqrt{\log (pq/\alpha) \max_{k,\ell}E_{1,k\ell}}
\, \right] \le \alpha,
$$
if $\sqrt{\log (pq/\alpha)} < d_2 \min_{k,\ell} \sqrt{E_{1,k\ell}} / E_{2,k\ell}$. 
A simple calculation leads to 
$$
\begin{array}{l}
\max\limits_{k,\ell}E_{1,k\ell} \le \dfrac{\{f_2(K_X, K_Y, \mu_X, \mu_Y)\}^2}{(n-1) (\pi_{\min}^J \wedge (\pi_{\min}^M)^2)^2},\\
\min\limits_{k,\ell} \sqrt{E_{1,k\ell}} / E_{2,k\ell} \ge g_2(K_X, K_Y, \mu_X, \mu_Y) \sqrt{n-1}(\pi_{\min}^J \wedge (\pi_{\min}^M)^2),
\end{array}
$$
where $\pi_{\min}^J = \min_{k,\ell} \pi_{k \ell}$, $\pi_{\min}^M =\min(\min_{k} \pi_k^X, \min_{\ell} \pi_\ell^Y)$, $\mu_X=\max_{k} |\mu_k^X|$, $\mu_Y=\max_{\ell} |\mu_\ell^Y|$, $f_2(K_X, K_Y, \mu_X, \mu_Y) = \max\{
K_XK_Y, \mu_X K_Y, K_X \mu_Y,  \mu_X\mu_Y, \mu_X^2, \mu_Y^2\}$, and $g_2(K_X, K_Y, \mu_X, \mu_Y) = \min\{1, \mu_X/K_Y, \mu_Y/K_X, \mu_X \mu_Y/(K_X K_Y)\}$. The superscript ``J'' and ``M'' stand for joint and marginal, respectively. Then, we conclude that for some numerical constants $\tilde{C}_2, d_2>0$
$$
{\rm P} \left[
\max\limits_{k, \ell} \big| \tilde{s}_{k\ell} - \sigma_{k\ell}^{XY}\big| >  \tilde{C}_2f_2(K_X, K_Y, \mu_X, \mu_Y) \sqrt{	\dfrac{\log (pq/\alpha)}{(n-1)(\pi_{\min}^J \wedge (\pi_{\min}^M)^2)}}
\, \right] \le \alpha, 
$$
if $\dfrac{n-1}{\log (pq/\alpha)} > \dfrac{d_2}{g_2(K_X, K_Y, \mu_X, \mu_Y) \pi_{\min}^J \wedge (\pi_{\min}^M)^2}$ holds.

\subsection{Measurement error case}\label{sec:msr_err_case}

The missing data problem is a special case of the multiplicative measurement error case if the multiplicative factor only takes either 1 or 0. Under some boundedness condition on the factors, we can extend the current framework to the multiplicative measurement error case, which is given below.
\begin{theorem}\label{thm:HWineq_msr_error}
	Let $(Z_{1j}, Z_{2j})$, $j=1,\ldots, n$, be independent pairs of (possibly correlated) random variables satisfying $\mathbb{E} Z_{1j} =\mathbb{E} Z_{2j}=0$, and
	$||Z_{ij}||_{\psi_2} \le K_i$, $i=1,2$ and $\forall j$. Also, suppose $(\gamma_{1j}, \gamma_{2j})$, $j=1,\ldots, n$, be independent pairs of (possibly correlated) non-negative random variables such that $\gamma_{ij} \le B_{ij}$ almost surely for some $B_{ij}>0$, $i=1,2$ and $\forall j$.
	Assume $Z_{ij}$ and $\gamma_{i'j'}$ are independent for distinct pairs $(i,j)\neq (i', j')$. Then, we have that for every $t > 0$
	$$
	\begin{array}{l}
	{\rm P} \bigg[
	\big| (\bZ_1 * \bgamma_1)^\top \bA (\bZ_2 * \bgamma_2) - \mathbb{E}(\bZ_1 * \bgamma_1)^\top \bA (\bZ_2 * \bgamma_2)\big| > t 
	\bigg] \\
	\qquad\qquad\qquad \le 2\exp \left\{
	- c \min \left(\dfrac{t^2}{K_1^2 K_2^2 ||\bD(\bB_1)\bA \bD(\bB_2)||_F^2 }, \dfrac{t}{K_1 K_2 ||\bA||_2}\right)
	\right\},
	\end{array}
	$$
	for some numerical constant $c>0$. $\bD(\bB_1)$ is a diagonal matrix with diagonal elements being from $\bB_1=(B_{1i}, 1\le i \le n)^\top$. $\bD(\bB_2)$ is similarly defined.
\end{theorem}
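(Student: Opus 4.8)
The plan is to absorb the bounded multipliers $\bgamma_1,\bgamma_2$ into the sub-Gaussian factors, turning the bounded-multiplier bilinear form into an ordinary dense bilinear form to which Theorem \ref{thm:HWineq_claim} applies in its degenerate case (all multipliers fixed at one). First I would rescale: set $\hat{Z}_{1i} = Z_{1i}\gamma_{1i}/B_{1i}$ and $\hat{Z}_{2j} = Z_{2j}\gamma_{2j}/B_{2j}$, collected into vectors $\hat{\bZ}_1,\hat{\bZ}_2$. Because $Z_{ij}$ is independent of $\gamma_{ij}$ and $Z_{ij}$ has mean zero, each $\hat{Z}_{ij}$ is mean zero; because $0\le\gamma_{ij}\le B_{ij}$ almost surely, we have $|\gamma_{ij}/B_{ij}|\le 1$, so $\mathbb{E}|\hat{Z}_{ij}|^p = B_{ij}^{-p}\,\mathbb{E}|\gamma_{ij}|^p\,\mathbb{E}|Z_{ij}|^p \le \mathbb{E}|Z_{ij}|^p$ for every $p\ge 1$, which yields the uniform bound $\|\hat{Z}_{ij}\|_{\psi_2}\le\|Z_{ij}\|_{\psi_2}\le K_i$. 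This rescaling is the crucial move: it makes the sub-Gaussian norms uniform over the coordinate index, which is exactly what Theorem \ref{thm:HWineq_claim} needs, whereas the unrescaled products $Z_{ij}\gamma_{ij}$ would only admit the coordinate-dependent bound $B_{ij}K_i$.

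Next I would verify that $(\hat{Z}_{1j},\hat{Z}_{2j})$ remains an independent-across-$j$ family of within-pair-correlated pairs: the quadruple $(Z_{1j},Z_{2j},\gamma_{1j},\gamma_{2j})$ is independent across $j$ by hypothesis, and each $\hat{Z}_{ij}$ is a function of only the $j$-th quadruple, so independence across pairs is inherited. Pulling the diagonal matrices out of the Hadamard products gives the identity
$$
(\bZ_1*\bgamma_1)^\top\bA(\bZ_2*\bgamma_2) = \hat{\bZ}_1^\top\,\tilde{\bA}\,\hat{\bZ}_2, \qquad \tilde{\bA} := \bD(\bB_1)\,\bA\,\bD(\bB_2),
$$
and a short computation shows the two centerings agree, since on the diagonal $\tilde{a}_{ii}\,\mathbb{E}[\hat{Z}_{1i}\hat{Z}_{2i}] = a_{ii}\,\mathbb{E}[Z_{1i}\gamma_{1i}Z_{2i}\gamma_{2i}]$ while all off-diagonal means vanish. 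Applying Theorem \ref{thm:HWineq_claim} to $\hat{\bZ}_1^\top\tilde{\bA}\hat{\bZ}_2$ with the degenerate multipliers $\pi_{ij}=\pi_{12,j}=1$—for which $\|\tilde{\bA}\|_{F,\pi}$ collapses to $\|\tilde{\bA}\|_F$—then delivers the tail bound with Frobenius factor $\|\tilde{\bA}\|_F = \|\bD(\bB_1)\bA\bD(\bB_2)\|_F$, matching the stated sub-Gaussian regime exactly.

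The final step is to reconcile the operator-norm term, which I expect to be the one delicate point. The reduction produces $\|\tilde{\bA}\|_2 = \|\bD(\bB_1)\bA\bD(\bB_2)\|_2$ in the linear regime, whereas the statement carries $\|\bA\|_2$; since $\|\bD(\bB_1)\bA\bD(\bB_2)\|_2\le(\max_i B_{1i})(\max_j B_{2j})\,\|\bA\|_2$, the two coincide precisely when the multipliers are bounded by one, the normalization that recovers the Bernoulli/missing-data case with $B_{ij}=1$. The asymmetry is intrinsic: absorbing the $B_{ij}$ into $\bA$ sharpens the Frobenius factor but cannot similarly sharpen the operator-norm factor, so one must either impose $B_{ij}\le 1$ or report the operator-norm term as $\|\bD(\bB_1)\bA\bD(\bB_2)\|_2$. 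Apart from this reconciliation, everything is a mechanical transcription of the proof of Theorem \ref{thm:HWineq_claim}.
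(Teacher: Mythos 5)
Your reduction is correct, but it is a genuinely different route from the paper's. The paper does not absorb the multipliers into the sub-Gaussian factors; it re-runs the proof of Theorem \ref{thm:HWineq_claim} with two local modifications: in the diagonal part it replaces the moment bound (\ref{eq:mgf_diag_sum}) by $\mathbb{E}(\lambda a_{ii}\gamma_{1i}\gamma_{2i}Z_{1i}Z_{2i})^s \le \mathbb{E}(\lambda |a_{ii}|B_{1i}B_{2i}|Z_{1i}Z_{2i}|)^s$, and in the off-diagonal part it observes that the indicators are conditioned on throughout Steps 1--3 and that only Lemma \ref{lem:step4} uses Bernoulli-ness, which it replaces by the deterministic bound $\gamma_{1i}\gamma_{2j}\le B_{1i}B_{2j}$ inside the exponential. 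Your rescaling $\hat{Z}_{ij}=Z_{ij}\gamma_{ij}/B_{ij}$ is cleaner: the verification that $\hat{Z}_{ij}$ is mean zero with $\|\hat{Z}_{ij}\|_{\psi_2}\le K_i$, that independence across $j$ is inherited, and that the centerings agree are all sound, and it buys you Theorem \ref{thm:HWineq_msr_error} as a one-line corollary of the dense case of Theorem \ref{thm:HWineq_claim} rather than a re-derivation. Your flagged ``delicate point'' is in fact a real discrepancy in the stated theorem, not an artifact of your method: with a single coordinate, $a_{11}=1$, $\gamma_{11}=\gamma_{21}=B$ deterministic and $Z_{11}=Z_{21}$ standard normal, the centered form is $B^2(g^2-1)$ with tail $\approx\exp(-t/(2B^2))$, which violates the stated linear-regime bound $\exp(-ct/(K_1K_2\|\bA\|_2))$ for large $B$; and even the paper's own diagonal modification produces $\max_i|a_{ii}|B_{1i}B_{2i}$ in the linear denominator, not $\max_i|a_{ii}|$. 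So the operator-norm factor should indeed be $\|\bD(\bB_1)\bA\bD(\bB_2)\|_2$ (or one must assume $B_{ij}\le 1$, as in the missing-data specialization); your version of the statement is the correct one, and your proof establishes it.
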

\noindent
The proof is straightforward, and thus we outline the idea behind it and omit the detail. In the diagonal part, we need to modify (\ref{eq:mgf_diag_sum}) as
$$
\mathbb{E} (\lambda a_{ii} \gamma_{1i}\gamma_{2i} Z_{1i}Z_{2i})^s \le 
\mathbb{E} \big(\lambda |a_{ii}| B_{1i}B_{2i} |Z_{1i}Z_{2i}| \big)^s, \quad \lambda > 0.
$$
For the off-diagonal case, a careful investigation into its proof shows that the missing indicators are conditioned in the analysis and the fact they are Bernoulli random variables is not used until Step 4 in Appendix \ref{app:proof_lem_cross_product_mgf}. The result of Lemma \ref{lem:step4} (see Step 4 in Appendix \ref{app:proof_lem_cross_product_mgf}) can be extended to the bounded random errors as we can derive 
$$
\mathbb{E} \left[ \exp\left(\tau \sum\limits_{i: \eta_i = 1}\gamma_{1i} \sum\limits_{j:\eta_j=0} a_{ij}^2\gamma_{2j} \right) | \eta \right] \le
\exp\left(
\tau \sum\limits_{i \neq j} a_{ij}^2 B_{1i} B_{2i}
\right), \quad \tau > 0.
$$
Furthermore, we state the result for the case with non-zero means.
\begin{theorem}\label{thm:HWineq_msr_error_nzm}
	Let $(\tilde{Z}_{1j}, \tilde{Z}_{2j})$, $j=1,\ldots, n$, be independent pairs of (possibly correlated) random variables with $\mathbb{E} \tilde{Z}_{1i}=\mu_{1i}$, $\mathbb{E} \tilde{Z}_{2j}=\mu_{2j}$, and
	$||\tilde{Z}_{ij} - \mu_{ij} ||_{\psi_2} \le K_i$, $i=1,2$ and $\forall j$. Also, suppose $(\gamma_{1j}, \gamma_{2j})$ be pairs of (possibly correlated) non-negative random variables such that $\gamma_{ij} \le B_{ij}$ almost surely for some $B_{ij}>0$, $i=1,2$ and $\forall j$. Assume $\tilde{Z}_{ij}$ and $\gamma_{i'j'}$ are independent for distinct pairs $(i,j)\neq (i', j')$. Then, we have that for every $t > 0$
	$$
	\begin{array}{l}
	{\rm P} \bigg[
	\big| (\tilde{\bZ}_1 * \bgamma_1)^\top \bA (\tilde{\bZ}_2 * \bgamma_2) - \mathbb{E}(\tilde{\bZ}_1 * \bgamma_1)^\top \bA (\tilde{\bZ}_2 * \bgamma_2)\big| > t 
	\bigg] \le d\exp \left\{
	- c \min \left(\dfrac{t^2}{E_1}, \dfrac{t}{E_2}\right)
	\right\},
	\end{array}
	$$
	for some numerical constants $c,d>0$. $E_1$ and $E_2$ are defined by
	$$
	\begin{array}{lll}
	E_1 = \max\Bigg\{ & K_1^2 K_2^2 ||\bD(\bB_1)\bA \bD(\bB_2)||_F^2, &  \max\limits_{1\le i \le n} B_{1i}^2 K_2^2 ||\bD(\bmu_1 * \bmu_1)^{1/2}\bA \bD(\bB_2)^{1/2}||_{F}^2,\\
	& \max\limits_{1\le i \le n} B_{2i}^2 K_1^2 ||\bD(\bB_1)^{1/2}\bA \bD(\bmu_2 * \bmu_2)^{1/2}||_{F}^2,
	& K_2^2 ||\{\bA^\top (\bmu_1 * \bu_1)\}*\bB_2||_2^2, \\
	& K_1^2 ||\{\bA(\bmu_2 * \bu_2)\} * \bB_1||_2^2, & \max\limits_{1\le i \le n} B_{1i}^2 \max\limits_{1\le i \le n} B_{2i}^2||\bD(\bmu_1)\bA \bD(\bmu_2)||_{F}^2\Bigg\},\\[1em]
	E_2 = \max\Bigg\{  & K_1 K_2 ||\bA||_2, &  \max\limits_{1\le i \le n} B_{1i} K_2 ||\bD(\bmu_1)\bA ||_2, \\
	& & \quad \max\limits_{1\le i \le n} B_{2i} K_1 ||\bA \bD(\bmu_2) ||_2 \Bigg\}. 
	\end{array}
	$$
\end{theorem}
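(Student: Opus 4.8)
The plan is to mirror the proof of Theorem~\ref{thm:HWineq_claim_nzm} almost line for line, replacing the centered Bernoulli inequality (Theorem~\ref{thm:HWineq_claim}) by its bounded-error counterpart (Theorem~\ref{thm:HWineq_msr_error}) and every Bernoulli variance quantity ($\pi_{ij}$, $V_{i,\pi}$) by the almost-sure envelopes $\gamma_{ij}\le B_{ij}$. First I would center the sub-Gaussian factors, writing $\tilde{Z}_{ij}=Z_{ij}+\mu_{ij}$ with $\mathbb{E}Z_{ij}=0$ and $||Z_{ij}||_{\psi_2}\le K_i$, so that $\tilde{\bZ}_i*\bgamma_i=\bZ_i*\bgamma_i+\bmu_i*\bgamma_i$. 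Expanding the bilinear form then splits it into four pieces,
\begin{align*}
(\tilde{\bZ}_1 * \bgamma_1)^\top \bA (\tilde{\bZ}_2 * \bgamma_2)
&= \underbrace{(\bZ_1 * \bgamma_1)^\top \bA (\bZ_2 * \bgamma_2)}_{T_1}
+ \underbrace{(\bZ_1 * \bgamma_1)^\top \bA (\bmu_2 * \bgamma_2)}_{T_2} \\
&\quad + \underbrace{(\bmu_1 * \bgamma_1)^\top \bA (\bZ_2 * \bgamma_2)}_{T_3}
+ \underbrace{(\bmu_1 * \bgamma_1)^\top \bA (\bmu_2 * \bgamma_2)}_{T_4}.
\end{align*}
Since ${\rm P}[|S-\mathbb{E}S|>t]\le\sum_{m=1}^4 {\rm P}[|T_m-\mathbb{E}T_m|>t/4]$, it suffices to bound each centered piece by $2\exp\{-c\min(t^2/E_1^{(m)},t/E_2^{(m)})\}$; taking $E_1=\max_m E_1^{(m)}$, $E_2=\max_m E_2^{(m)}$ and absorbing the factor four into $d$ gives the claim.

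For $T_1$ I would apply Theorem~\ref{thm:HWineq_msr_error} verbatim, which yields exactly $E_1^{(1)}=K_1^2K_2^2||\bD(\bB_1)\bA\bD(\bB_2)||_F^2$ and $E_2^{(1)}=K_1K_2||\bA||_2$, the first entries of $E_1,E_2$. For the mixed pieces $T_2$ and $T_3$, each linear in a single centered sub-Gaussian vector, I would condition on $(\bgamma_1,\bgamma_2)$: given the $\gamma$'s, $T_2=\sum_i Z_{1i}\,\gamma_{1i}\,(\bA(\bmu_2*\bgamma_2))_i$ is a sum of independent mean-zero sub-Gaussians, so its conditional moment generating function is at most $\exp\{C\lambda^2 K_1^2\sum_i \gamma_{1i}^2 (\bA(\bmu_2*\bgamma_2))_i^2\}$. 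Integrating over the bounded $\gamma$'s and invoking the envelopes $\gamma_{ij}\le B_{ij}$ together with the extension of Lemma~\ref{lem:step4} displayed before Theorem~\ref{thm:HWineq_msr_error}, I would reduce the random weights to their deterministic counterparts while keeping the Frobenius structure; this generates the mixed terms (those coupling $\bmu$, $K$ and the envelopes $\bB$, such as $\max_i B_{2i}^2 K_1^2 ||\bD(\bB_1)^{1/2}\bA\bD(\bmu_2*\bmu_2)^{1/2}||_F^2$ and $K_1^2||\{\bA(\bmu_2*\bu_2)\}*\bB_1||_2^2$) in $E_1$, along with the operator term $\max_i B_{2i} K_1 ||\bA\bD(\bmu_2)||_2$ in $E_2$. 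The piece $T_3$ is handled symmetrically, producing the transposed terms and $\max_i B_{1i} K_2 ||\bD(\bmu_1)\bA||_2$.

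Finally, $T_4=\bgamma_1^\top\bD(\bmu_1)\bA\bD(\bmu_2)\bgamma_2$ contains no sub-Gaussian factor, so I would treat it as a bilinear form purely in the bounded variables $\bgamma_1,\bgamma_2$: conditioning on one block and applying Hoeffding's lemma (Theorem~\ref{thm:hoeffing}) to the independent, interval-bounded coordinates of the other, then integrating the residual block, yields a sub-Gaussian tail with proxy $\max_i B_{1i}^2\max_i B_{2i}^2||\bD(\bmu_1)\bA\bD(\bmu_2)||_F^2$ and \emph{no} sub-exponential part---which is precisely why this term enters $E_1$ but is absent from $E_2$. I expect the main obstacle to lie in $T_2,T_3$ and $T_4$: once the sub-Gaussian vectors are integrated out, one is left with moment bounds for products and sums of the bounded $\gamma$'s, and the difficulty is controlling these so as to recover the Frobenius (rather than a looser $\ell_1$ or operator) norm with the correct powers of the envelopes. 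This is exactly the step where the almost-sure bound $\gamma_{ij}\le B_{ij}$ substitutes for the Bernoulli variance computations used in Theorem~\ref{thm:HWineq_claim_nzm}, and it is the only place where the boundedness hypothesis is genuinely needed.
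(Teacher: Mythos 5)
Your high-level plan (split off the means, apply the bounded-error Hanson--Wright theorem to the pure bilinear piece, and handle the remaining pieces separately) is in the right spirit, but your four-term decomposition $T_1+\dots+T_4$ is coarser than what the paper's argument (the proof of Theorem~\ref{thm:HWineq_claim_nzm} in Appendix~\ref{app:proof_HWineq_nzm}, which is what ``the rest of the arguments are similar'' points to) actually uses, and the extra coarseness is where your proof breaks. The paper centers the multiplicative factors as well, writing $\gamma_{ij}=u_{ij}+(\gamma_{ij}-u_{ij})$, which turns $S$ into eight terms: bilinear forms of \emph{centered} variables (handled by Theorem~\ref{thm:HWineq_msr_error}, treating $\gamma_{ij}-u_{ij}$ as a bounded, hence sub-Gaussian, factor) plus linear forms with \emph{deterministic} coefficients (handled by Theorem~\ref{thm:hoeffing}). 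You can see this structure directly in the statement: the terms $K_1^2 ||\{\bA(\bmu_2 * \bu_2)\} * \bB_1||_2^2$ and $\max_i B_{2i}^2 K_1^2 ||\bD(\bB_1)^{1/2}\bA \bD(\bmu_2 * \bmu_2)^{1/2}||_{F}^2$ are precisely the Hoeffding contribution of the mean part $\bu_2$ and the bilinear contribution of the centered part $\bgamma_2-\bu_2$ of your $T_2$. Your proposed route for $T_2$ --- condition on the $\gamma$'s and then ``reduce the random weights to their deterministic counterparts'' via $\gamma_{2j}\le B_{2j}$ --- cannot produce these: the conditional variance proxy is $\sum_i \gamma_{1i}^2\big(\sum_j a_{ij}\mu_{2j}\gamma_{2j}\big)^2$, and replacing $\gamma_{2j}$ by its envelope yields $\sum_i B_{1i}^2\big(\sum_j |a_{ij}\mu_{2j}| B_{2j}\big)^2$, an $\ell_1$-in-$j$ quantity that can exceed every entry of $E_1$ by a factor of order $n$ (Cauchy--Schwarz in the wrong direction). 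Recovering the Frobenius structure requires exactly the centering you skipped.

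The treatment of $T_4$ has a more serious flaw. The claim that a bilinear form in bounded variables has ``no sub-exponential part'' is false in general: take $m_{ij}=1/n$ and centered Rademacher-type $\gamma$'s, so that $T_4\approx n^{-1}(\sum_i\xi_i)(\sum_j\zeta_j)$ is essentially a product of two near-Gaussians with tails $e^{-ct}$ for $t\lesssim n$, while $||M||_F\asymp 1$; a purely sub-Gaussian bound with proxy $||M||_F^2$ is impossible there. (This is why the paper's $E_2$ in Theorem~\ref{thm:HWineq_claim_nzm} carries the operator-norm term $V_{1,\pi}V_{2,\pi}||\bD(\bmu_1)\bA\bD(\bmu_2)||_2$ coming from the $(\gamma_1-\bpi_1)$--$(\gamma_2-\bpi_2)$ bilinear form; the absence of the analogous $\max_i B_{1i}\max_i B_{2i}||\bD(\bmu_1)\bA\bD(\bmu_2)||_2$ term from $E_2$ in Theorem~\ref{thm:HWineq_msr_error_nzm} looks like an omission in the statement rather than something one can prove away.) Your conditioning argument for $T_4$ also fails on two further counts: $(\gamma_{1i},\gamma_{2i})$ may be dependent within a pair, so conditioning on $\bgamma_2$ does not leave the $\gamma_{1i}$ with their unconditional means, and the resulting conditional proxy $\sum_i B_{1i}^2(\sum_j m_{ij}\gamma_{2j})^2$ again only admits an $\ell_1$ bound, not $\max_i B_{1i}^2\max_i B_{2i}^2||M||_F^2$. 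The fix is the paper's: write $T_4$ as a bilinear form in the centered bounded variables plus two linear forms with deterministic coefficients, and apply Theorem~\ref{thm:HWineq_msr_error} and Theorem~\ref{thm:hoeffing} respectively.
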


The rest of arguments are similar to Section \ref{sec:missing_case}. Assume we observe $\tilde{\bX}_i = \bdelta_i^X * \bX_i$ and $\tilde{\bY}_i = \bdelta_i^Y * \bY_i$, $i=1,\ldots, n$. While $(\bX_i, \bY_i)$ is an independent copy of $(\bX, \bY)$ in Assumption \ref{assump:X_and_Y}, $(\bdelta_i^X, \bdelta_i^Y)$ is no longer a vector of binary variables but an independent copy of $(\bdelta^X, \bdelta^Y)$ in Assumption \ref{assump:msr_err}.
\begin{assumption}\label{assump:msr_err}
	Each component $\delta_k^X$ of $\bdelta^X = (\delta_k^X, 1\le k \le p)^\top$ is a measurement error corresponding to $X_k$ of $\bX$, which is a non-negative random variable satisfying $\delta_k^X \le B_k^X$ almost surely for each $k$. $\bdelta^Y$ is similarly defined. Their moments are given by
	$$
	\begin{array}{l}
	\mathbb{E}\bdelta^X = \bu^X = (u_k^X, 1\le k \le p)^\top, \\
	\mathbb{E}\bdelta^Y = \bu^Y  = (u_\ell^Y, 1\le \ell \le q)^\top,\\
	\mathbb{E} \bdelta^X (\bdelta^Y)^\top = \bU^{XY} = (u_{k\ell}, 1\le k \le p, 1 \le \ell \le q).
	\end{array}
	$$
\end{assumption}
Accordingly, the unbiased estimator for $\sigma_{k\ell}^{XY}$ is 
$$
\check{s}_{k\ell} = \dfrac{\sum_{i=1}^n \tilde{X}_{ik} \tilde{Y}_{i\ell}}{n u_{k \ell}} - \dfrac{\sum_{i\neq j}^n \tilde{X}_{ik} \tilde{Y}_{j\ell}}{n(n-1) u_k^X u_{\ell}^Y}.
$$
In this case, we can derive 
$$
{\rm P} \bigg[
\big| \check{s}_{k\ell} - \sigma_{k\ell}^{XY}\big| > t 
\bigg] \le d \exp \left\{
- \dfrac{c_3 t^2}{E_{1,k\ell}}
\right\}, \quad t < E_{1,k\ell} / E_{2,k\ell}
$$
where $c_3, d>0$ are some numerical constants and $E_{1,k\ell}, E_{2,k\ell}$ are defined below.
$$
\begin{array}{l}
E_{1,k\ell} = \max\bigg[
\max\Big\{K_X^2 K_Y^2 (B_k^X)^2 (B_\ell^Y)^2, 
|\mu_k^X|^2 K_Y^2 (B_k^X)^2 B_\ell^Y,\\
\qquad\qquad\qquad\qquad
K_X^2 |\mu_\ell^Y|^2  B_k^X (B_\ell^Y)^2, 
|\mu_k^X|^2  |\mu_\ell^Y|^2 (B_k^X)^2 (B_\ell^Y)^2\Big\}
\left(\dfrac{1}{n u_{k \ell}^2} + \dfrac{1}{n(n-1)(u_k^X)^2 (u_\ell^Y)^2}\right),\\
\qquad\qquad\qquad\qquad
\max\Big\{ K_X^2 |\mu_\ell^Y|^2  (B_k^X)^2 (u_\ell^Y)^2, |\mu_k^X|^2 K_Y^2 (u_k^X)^2 (B_\ell^Y)^2\Big\} \dfrac{1}{n}\left(\dfrac{1}{u_{k \ell}} - \dfrac{1}{u_k^X u_\ell^Y}\right)^2
\bigg],\\
E_{2,k\ell} = \max\Big\{K_X K_Y, K_X |\mu_\ell^Y| B_k^X, |\mu_k^X|K_Y B_\ell^X \Big\} \left(\dfrac{1}{n u_{k \ell}} + \dfrac{1}{n(n-1) u_k^X u_\ell^Y}\right).
\end{array}
$$
Moreover, we can observe 
$$
\begin{array}{l}
\max\limits_{k,\ell}E_{1,k\ell} \le \dfrac{\{f_3(K_X, K_Y, \mu_X, \mu_Y, B_X, B_Y)\}^2}{(n-1) (u_{\min}^J \wedge (u_{\min}^M)^2)^2},\\
\min\limits_{k,\ell} \sqrt{E_{1,k\ell}} / E_{2,k\ell} \ge g_3(K_X, K_Y, \mu_X, \mu_Y, B_X, B_Y) \sqrt{n-1} \cdot u_{\min}^J \wedge (u_{\min}^M)^2,
\end{array}
$$
where $u_{\min}^J = \min_{k,\ell} u_{k \ell}$, $u_{\min}^M =\min(\min_{k} u_k^X, \min_{\ell} u_\ell^Y)$, $\mu_X=\max_{k} |\mu_k^X|$, $\mu_Y=\max_{\ell} |\mu_\ell^Y|$, $B_X=\max_{k} B_k^X$, $B_Y=\max_{\ell} B_\ell^Y$, $g_3(K_X, K_Y, \mu_X, \mu_Y, B_X, B_Y) = \min\{1, K_X / (B_X \mu_X), K_Y / (B_Y \mu_Y)\}$, and
$$
\begin{array}{l}
f_3(K_X, K_Y, \mu_X, \mu_Y, B_X, B_Y) = \max\{ K_X K_Y B_X B_Y, 
\mu_X K_Y B_X B_Y, K_X \mu_Y B_X B_Y,\\ 
\qquad\qquad\qquad\qquad\qquad\qquad\qquad\qquad\qquad \mu_X \mu_Y B_X B_Y, 
K_X \mu_Y B_X u_Y, \mu_X K_Y u_X B_Y,\}.
\end{array}
$$
The superscript ``J'' and ``M'' stand for joint and marginal, respectively.

Repeating the calculation as in the previous section, we conclude that  for some numerical constants $C_3, d_3 >0$
$$
{\rm P} \left[
\max\limits_{k, \ell} \big| \check{s}_{k\ell} - \sigma_{k\ell}^{XY}\big| >  C_3 f_3(K_X, K_Y, \mu_X, \mu_Y, B_X, B_Y) \sqrt{	\dfrac{\log (pq/\alpha)}{(n-1) \cdot (u_{\min}^J)^2 \wedge (u_{\min}^M)^4}}
\, \right] \le \alpha,
$$
if $\dfrac{n-1}{\log (pq/\alpha)} \ge \dfrac{d_3}{g_3(K_X, K_Y, \mu_X, \mu_Y, B_X, B_Y) \cdot u_{\min}^J \wedge (u_{\min}^M)^2}$ holds.

\section{Discussion}

We discuss the generalized Hanson-Wright inequality where the sparse structure and bilinear form are considered for the first time. This extension facilitates an analysis of concentration of the sample (cross-)covariance estimator even when mean parameters are unknown and some of the data are missing. We apply this result to multiple testing of the cross-covariance matrix. 

We further consider a measurement error case extended from the missing data case, which is limited to a bounded random variable. It would be interesting to consider more general multiplicative errors as future work; for example, the truncated normal distribution defined on $(0, \infty)$ can be a good example.

\bibliographystyle{apalike}
\bibliography{references}

\begin{thebibliography}{}

\bibitem[Bailey et~al., 2019]{Bailey:2019}
Bailey, N., Pesaran, M., and Smith, L.~V. (2019).
\newblock A multiple testing approach to the regularisation of large sample
  correlation matrices.
\newblock {\em Journal of Econometrics}, 208(2):507--534.

\bibitem[Cai et~al., 2019]{Cai:2019}
Cai, T., Cai, T., Liao, K., and Liu, W. (2019).
\newblock Large-scale simultaneous testing of cross-covariance matrices with
  applications to phewas.
\newblock {\em Statistica Sinica}, 29:983--1005.

\bibitem[Park et~al., 2021]{Park:2021}
Park, S., Wang, X., and Lim, J. (2021).
\newblock {Estimating high-dimensional covariance and precision matrices under
  general missing dependence}.
\newblock {\em Electronic Journal of Statistics}, 15(2):4868 -- 4915.

\bibitem[Rudelson and Vershynin, 2013]{Rudelson:2013}
Rudelson, M. and Vershynin, R. (2013).
\newblock Hanson-wright inequality and sub-gaussian concentration.
\newblock {\em Electronic Communications in Probability}, 18:9 pp.

\bibitem[Vershynin, 2018]{Vershynin:2018}
Vershynin, R. (2018).
\newblock {\em High-dimensional probability : an introduction with applications
  in data science}.
\newblock Cambridge University Press, Cambridge, United Kingdom New York, NY.

\bibitem[Zhou, 2019]{Zhou:2019}
Zhou, S. (2019).
\newblock Sparse hanson–wright inequalities for subgaussian quadratic forms.
\newblock {\em Bernoulli}, 25(3):1603--1639.

\end{thebibliography}

\newpage
\appendix

\section*{Appendix}

\section{Proof of Theorem 1}\label{app:proof_HWineq}

\subsection{Proof of Lemma \ref{lem:HWineq_diag} (diagonal part)}

Define $S_0$ by
$$
S_0 = \sum_{i=1}^n a_{ii} \gamma_{1i}\gamma_{2i} X_{1i}Z_{2i} - 
\mathbb{E} \left[\sum_{i=1}^n a_{ii} \gamma_{1i}\gamma_{2i} Z_{1i}Z_{2i} \right].
$$
For $|\lambda|< 1/(4e K_1 K_2 \max_i |a_{ii}|)$, 
\begin{equation}\label{eq:mgf_diag_sum}
	\begin{array}{l}
		\mathbb{E} \exp(\lambda S_0) \\[0.8em]
		= \dfrac{\mathbb{E} \exp(\lambda \sum_{i=1}^n a_{ii} \gamma_{1i}\gamma_{2i} Z_{1i}Z_{2i})}{\exp\left(\lambda
			\mathbb{E} \left[\sum_{i=1}^n a_{ii} \gamma_{1i}\gamma_{2i} Z_{1i}Z_{2i} \right]
			\right)} \\[0.8em]
		= \prod\limits_{i=1}^n\dfrac{\mathbb{E}_\gamma\mathbb{E}_X \exp(\lambda  a_{ii} \gamma_{1i}\gamma_{2i} Z_{1i}Z_{2i})}{\exp\left(
			\lambda a_{ii} \mathbb{E}\gamma_{1i}\gamma_{2i} \mathbb{E}Z_{1i}Z_{2i}
			\right)} \\[0.8em]
		= \prod\limits_{i=1}^n\dfrac{(1 - \pi_{12,i})  + \pi_{12,i} \mathbb{E}_X \exp(\lambda  a_{ii} Z_{1i}Z_{2i})}{\exp\left(
			\lambda a_{ii} \pi_{12,i} \mathbb{E}Z_{1i}Z_{2i}
			\right)} \\[0.8em]
		\le \prod\limits_{i=1}^n\dfrac{(1 - \pi_{12,i})  + \pi_{12,i}
			(1 + \lambda a_{ii} \mathbb{E}Z_{1i}Z_{2i} + 
			16\lambda^2 a_{ii}^2 K_1^2 K_2^2)
		}{\exp\left(
			\lambda a_{ii} \pi_{12,i} \mathbb{E}Z_{1i}Z_{2i}
			\right)} \\[0.8em]
		\le \prod\limits_{i=1}^n\dfrac{
			\exp\left\{ \pi_{12,i}(\lambda a_{ii} \mathbb{E}Z_{1i}Z_{2i} + 
			16\lambda^2 a_{ii}^2 K_1^2 K_2^2)\right\}
		}{\exp\left(
			\lambda a_{ii} \pi_{12,i} \mathbb{E}Z_{1i}Z_{2i}
			\right)} \\[0.8em]
		\le \exp\left\{ 
		16\lambda^2 K_1^2 K_2^2 \sum\limits_{i=1}^n \pi_{12,i} a_{ii}^2\right\}
	\end{array}
\end{equation}
where the first inequality uses Lemma \ref{lem:subexp_mgf} and the second holds since $1+x \le e^x$.
\begin{lemma}\label{lem:subexp_mgf}
	Assume that $||Z_{1i}||_{\psi_2} \le K_1, ||Z_{2i}||_{\psi_2} \le K_2$ for some $K_1, K_2>0$ and $|\lambda|< 1/(4eK_1 K_2 \max_i |a_{ii}|)$ for given $\{a_{ii}\}_{i=1}^n \subset \mathbb{R}$. Then, for any $i$, we have 
	$$
	\mathbb{E} \exp\left(
	\lambda a_{ii} Z_{1i} Z_{2i}
	\right) - 1 \le 
	\lambda a_{ii} \mathbb{E} Z_{1i} Z_{2i} + 16 \lambda^2 a_{ii}^2 K_1^2 K_2^2.
	$$
\end{lemma}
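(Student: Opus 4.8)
The plan is to expand the moment generating function as a power series in $\lambda a_{ii}$ and control it term by term. Writing $W=Z_{1i}Z_{2i}$ and $\beta=\lambda a_{ii}$ for brevity, we have $\mathbb{E}\exp(\beta W)=\sum_{s=0}^\infty \frac{\beta^s}{s!}\mathbb{E}W^s$. The $s=0$ term is precisely the $1$ subtracted on the left-hand side, and the $s=1$ term is precisely the quantity $\lambda a_{ii}\mathbb{E}Z_{1i}Z_{2i}$ that appears (kept exactly, not bounded) on the right-hand side. So the entire claim reduces to showing that the tail obeys
$$
\sum_{s=2}^\infty \frac{|\beta|^s}{s!}\,\mathbb{E}|W|^s \le 16\lambda^2 a_{ii}^2 K_1^2 K_2^2 .
$$
Keeping the first-order term exactly is important, since it carries the (possibly nonzero) mean of the product $Z_{1i}Z_{2i}$, which must survive into the statement.

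The first substantive step is to bound the cross-moments $\mathbb{E}|W|^s=\mathbb{E}\bigl(|Z_{1i}|^s|Z_{2i}|^s\bigr)$. Because the hypotheses allow $Z_{1i}$ and $Z_{2i}$ to be correlated within a pair, this expectation does not factor, so I decouple the two factors by Cauchy--Schwarz, $\mathbb{E}|Z_{1i}|^s|Z_{2i}|^s \le (\mathbb{E}|Z_{1i}|^{2s})^{1/2}(\mathbb{E}|Z_{2i}|^{2s})^{1/2}$. The defining inequality of the $\psi_2$-norm gives $(\mathbb{E}|Z_{ji}|^p)^{1/p}\le K_j\sqrt{p}$, hence $\mathbb{E}|Z_{ji}|^{2s}\le K_j^{2s}(2s)^s$; multiplying the two square roots yields the clean bound $\mathbb{E}|W|^s \le (K_1K_2)^s(2s)^s$.

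It then remains to sum the resulting series, and here the constant $16$ is pinned down by a ratio argument rather than a crude per-term bound. Set $v:=|\lambda a_{ii}|K_1K_2$ and $c_s:=(2s)^s/s!$, so the tail is $\sum_{s\ge2}c_s v^s$. The successive ratio is $c_{s+1}/c_s = 2(1+1/s)^s \le 2e$, so each term is dominated by the leading one, $c_s v^s \le c_2 v^2 (2ev)^{s-2}$ with $c_2=8$. The hypothesis $|\lambda|<1/(4eK_1K_2\max_i|a_{ii}|)$ forces $v<1/(4e)$, whence $2ev<1/2$ and $\sum_{k\ge0}(2ev)^k<2$; multiplying gives a tail of at most $8v^2\cdot 2 = 16v^2 = 16\lambda^2 a_{ii}^2 K_1^2 K_2^2$, exactly as claimed.

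The main obstacle is the correlation between $Z_{1i}$ and $Z_{2i}$: since they need not be independent, the product moment cannot be split multiplicatively, and Cauchy--Schwarz is the device that reduces the bilinear quantity to two separate sub-Gaussian moment estimates (at the cost of the factor $(2s)^s$ instead of $s^s$). The rest is bookkeeping: one must choose the radius of convergence in $\lambda$ small enough that the geometric majorant both converges and carries a small enough ratio to land the prefactor at the stated constant, which is why the explicit threshold $1/(4eK_1K_2\max_i|a_{ii}|)$ is exactly what makes $2ev<1/2$.
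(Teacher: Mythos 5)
Your proof is correct, and it follows the same skeleton as the paper's — expand $\mathbb{E}\exp(\lambda a_{ii}Z_{1i}Z_{2i})$ as a power series, keep the $s=0,1$ terms exactly (so the possibly nonzero mean survives), and show the $s\ge 2$ tail is at most $16\lambda^2 a_{ii}^2K_1^2K_2^2$ — but the device for controlling the moments is genuinely different. The paper normalizes by the sub-exponential norm of the product, $Y_i=Z_{1i}Z_{2i}/\|Z_{1i}Z_{2i}\|_{\psi_1}$, then invokes the external moment bound $\mathbb{E}|Y_i|^s\le 2s^s$ (Vershynin, Prop.\ 2.7.1) together with Stirling's formula, and implicitly relies on a bound of the form $\|Z_{1i}Z_{2i}\|_{\psi_1}\le cK_1K_2$ to translate the hypothesis on $\lambda$ into $e|t_i|\le 1/2$; with the natural choice $c=2$ (from Cauchy--Schwarz) their final constant $2e^2\|Z_{1i}Z_{2i}\|_{\psi_1}^2/\sqrt{\pi}$ actually exceeds $16K_1^2K_2^2$, so their stated constant depends on which product-norm bound one takes. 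You instead decouple the two factors directly by Cauchy--Schwarz, $\mathbb{E}|W|^s\le(\mathbb{E}|Z_{1i}|^{2s})^{1/2}(\mathbb{E}|Z_{2i}|^{2s})^{1/2}\le (K_1K_2)^s(2s)^s$, using only the paper's own definition of the $\psi_2$-norm, and then sum the tail by the ratio bound $c_{s+1}/c_s=2(1+1/s)^s\le 2e$ and the geometric majorant forced by $v<1/(4e)$. What your route buys is self-containedness (no $\psi_1$-norm, no external proposition, no Stirling) and an argument that lands exactly on the advertised constant $16$; what it costs is nothing here, since the factor $(2s)^s$ rather than $s^s$ is absorbed harmlessly by the same radius restriction on $\lambda$. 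The only step you leave tacit is the interchange of expectation and infinite sum, which is justified by your own absolute-convergence estimate, so the argument is complete.
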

\begin{proof}
	First, we define the sub-exponential (or $\psi_1$-) norm of a random variable by
	$$
	||X||_{\psi_1} = \sup_{p \ge 1} \dfrac{(\mathbb{E}|X|^p)^{1/p}}{p}.
	$$
	Since the product of sub-Gaussian random variables is sub-exponential, we define $Y_i = Z_{1i}Z_{2i}/||Z_{1i}Z_{2i}||_{\psi_1}$. Setting $t_i = \lambda a_{ii} ||Z_{1i}Z_{2i}||_{\psi_1}$, we get
	$$
	\begin{array}{l}
	\mathbb{E} \exp\left(
	t_i Y_i
	\right)\\
	= 1 + t_i \mathbb{E}Y_i + \sum_{s\ge 2} \dfrac{t_k^s\mathbb{E}Y_i^s }{s!}\\
	\le 	1 + t_i \mathbb{E}Y_i + \sum_{s\ge 2} \dfrac{|t_k|^s\mathbb{E}|Y_i|^s }{s!}\\
	\le 	1 + t_i \mathbb{E}Y_i + \sum_{s\ge 2} \dfrac{2|t_k|^s s^s }{s!} \\
	\le 	1 + t_i \mathbb{E}Y_i + \sum_{s\ge 2} \dfrac{|t_k|^s e^s }{\sqrt{\pi}} 
	\end{array}
	$$
	In the last two inequalities, we use the following observations. First, note that for the subexponential variable satisfying $||Y_i||_{\psi_1} = 1$, it holds (see Prop 2.7.1, \cite{Vershynin:2018}) 
	$$
	\mathbb{E}|Y_i|^s \le 2 s^s, \quad s\ge 1.
	$$
	Second, we use Stirling's formula for $s \ge 2$ that
	$$
	\dfrac{1}{s!} \le \dfrac{e^s}{ 2s^s \sqrt{\pi}}.
	$$
	If $|\lambda|< 1/(4eK_1 K_2 \max_i |a_{ii}|)$, then $e|t_i| \le 1/2$ and thus we get
	$$
	\sum_{s\ge 3} e^s |t_i|^s   \le (e|t_i|)^3 \sum_{s\ge 0}(1/2)^s \le (e|t_i|)^2.
	$$
	Using the above, we have
	$$
	\begin{array}{rcl}
	\mathbb{E} \exp\left(
	t_i Y_i
	\right) &\le& 
	1 + t_i \mathbb{E}Y_i + 2e^2 |t_i|^2/\sqrt{\pi}\\
	& = & 1 + \lambda t_i \mathbb{E}Z_{1i}Z_{2i} + 2e^2 \lambda^2 a_{ii}^2 ||Z_{1i}Z_{2i}||_{\psi_1}^2/\sqrt{\pi}\\
	& \le & 1 + \lambda t_i \mathbb{E}Z_{1i}Z_{2i} + 16 \lambda^2 a_{ii}^2 K_1^2 K_2^2.
	\end{array}
	$$
\end{proof}

Then, for $x>0$ and $0<\lambda <  1/(4e K_1 K_2 \max_i |a_{ii}|)$, we have
$$
\begin{array}{l}
{\rm P}(S_0 > x) \\
= {\rm P}\left(\exp(\lambda S_0) > \exp(\lambda x)\right) \\
\le \dfrac{\mathbb{E} \exp(\lambda S_0)}{\exp(\lambda x)} \\
\le \exp\left\{ -\lambda x + 
16\lambda^2 K_1^2 K_2^2 \sum\limits_{i=1}^n \pi_{12,i} a_{ii}^2\right\}.
\end{array}
$$
For the optimal choice of $\lambda$, that is, 
$$
\lambda = \min\left(
\dfrac{x}{32e K_1^2 K_2^2 \sum_i \pi_{12,i} a_{ii}^2},
\dfrac{1}{4e K_1 K_2 \max_i |a_{ii}|}
\right)
$$
we can obtain the concentration bound
$$
{\rm P}(S_0 > x) \le 
\exp \left\{
-\min\left(
\dfrac{x^2}{32K_1^2 K_2^2 \sum_i \pi_{12,i} a_{ii}^2},
\dfrac{x}{8e K_1 K_2 \max_i |a_{ii}|}
\right)
\right\}.
$$
Considering $-A_{\rm diag}$ instead of $A_{\rm diag}$ in the theorem, we have the same probability bound for ${\rm P}(S_0 < -x)$, $x>0$, which concludes the proof.

\subsection{Proof of Lemma \ref{lem:HWineq_offdiag} (off-diagonal part)}
Define $S_{\rm off}$ by
$$
S_{\rm off} = \sum_{1 \le i \neq j \le n} a_{ij} \gamma_{1i}\gamma_{2j} Z_{1i}Z_{2j},
$$
whose expectation is zero due to independence across distinct $i$ and $j$. Then, we claim the lemma below:
\begin{lemma}\label{lem:cross_product_mgf}
	Assume $||Z_{1i}||_{\psi_2}=||Z_{2i}||_{\psi_2} = 1$ and let $\{a_{ij}\}_{1 \le i \neq j \le n} \subset \mathbb{R}$ be given. For $|\lambda| < 1/(2 \sqrt{C_4} || A||_2)$ for some numeric constant $C_4>0$,  we have 
	$$
	\mathbb{E} \exp\left(
	\lambda S_{\rm off}
	\right) \le 
	\exp \left( 1.44 C_4 \lambda^2 \sum_{i\neq j} a_{ij} \pi_{1i}\pi_{2j} \right).
	$$
\end{lemma}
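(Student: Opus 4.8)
The plan is to follow the decoupling-plus-Gaussian-comparison strategy of \cite{Rudelson:2013}, modified to carry the masking variables $\gamma_{1i},\gamma_{2j}$ through the argument; since the hypothesis already normalizes $\|Z_{1i}\|_{\psi_2}=\|Z_{2i}\|_{\psi_2}=1$, the only quantities that should survive are $\|\bA\|_2$ and $\sum_{i\neq j}a_{ij}^2\pi_{1i}\pi_{2j}$. First I would decouple. For independent selectors $\eta_1,\dots,\eta_n\sim\mathrm{Bernoulli}(1/2)$, independent of everything else, one has $\mathbb{E}[\eta_i(1-\eta_j)]=1/4$ for $i\neq j$, so $S_{\rm off}=4\,\mathbb{E}_\eta\sum_{i\neq j}\eta_i(1-\eta_j)a_{ij}\gamma_{1i}\gamma_{2j}Z_{1i}Z_{2j}$; note this identity needs no symmetry of $\bA$. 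Applying Jensen's inequality to $x\mapsto e^{\lambda x}$ gives
$$
\mathbb{E}\exp(\lambda S_{\rm off})\le \mathbb{E}_\eta\,\mathbb{E}\exp\!\Big(4\lambda\sum_{i\in I_1,\,j\in I_0}a_{ij}\gamma_{1i}\gamma_{2j}Z_{1i}Z_{2j}\Big),
$$
where $I_1=\{i:\eta_i=1\}$ and $I_0=\{j:\eta_j=0\}$. The purpose of this step is that $I_1\cap I_0=\emptyset$, so $i\neq j$ throughout and the collections $(Z_{1i})_{i\in I_1}$ and $(Z_{2j})_{j\in I_0}$ become genuinely independent, which removes the same-index correlation of the pairs $(Z_{1j},Z_{2j})$.

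Next I would pass from sub-Gaussian to Gaussian. Fixing $\eta$ and all $\gamma$'s and conditioning on $(Z_{2j})_{j\in I_0}$, the exponent is linear in the independent sub-Gaussian variables $(Z_{1i})_{i\in I_1}$, so $\mathbb{E}_{Z_1}\exp(\sum_i t_iZ_{1i})\le\exp(C\sum_i t_i^2)=\mathbb{E}_{g_1}\exp(\sqrt{2C}\sum_i t_ig_{1i})$ by the scalar sub-Gaussian bound together with the Gaussian moment generating function. This replaces each $Z_{1i}$ by $\sqrt{2C}\,g_{1i}$ in the sense of an upper bound; repeating the device for $Z_2$ reduces the problem to a decoupled Gaussian chaos $g_1^\top\widetilde{\bA}\,g_2$ with $\widetilde{\bA}=\bD(\gamma_1\mathbf 1_{I_1})\,\bA\,\bD(\gamma_2\mathbf 1_{I_0})$. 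For fixed $\gamma,\eta$ the chaos obeys $\mathbb{E}_g\exp(\mu\,g_1^\top\widetilde{\bA}g_2)\le\exp(C\mu^2\|\widetilde{\bA}\|_F^2)$ whenever $|\mu|\,\|\widetilde{\bA}\|_2<c$; here $\|\widetilde{\bA}\|_2\le\|\bA\|_2$ because multiplying on each side by a diagonal with entries in $[0,1]$ cannot increase the operator norm, which is exactly what produces the radius restriction $|\lambda|<1/(2\sqrt{C_4}\,\|\bA\|_2)$, while $\|\widetilde{\bA}\|_F^2=\sum_{i\in I_1,j\in I_0}a_{ij}^2\gamma_{1i}\gamma_{2j}$.

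Finally I would integrate out the masking variables: what remains is $\mathbb{E}_\gamma\exp\!\big(C\mu^2\sum_{i\in I_1,j\in I_0}a_{ij}^2\gamma_{1i}\gamma_{2j}\big)$, the analogue for Bernoulli $\gamma$ of the bounded-error estimate displayed in the discussion preceding Theorem \ref{thm:HWineq_msr_error_nzm}. Because $\gamma_{1i}$ and $\gamma_{2j}$ lie in $[0,1]$ and are independent across the disjoint sets $I_1,I_0$, I would peel off the inner $\gamma_2$-sum and then the $\gamma_1$-sum by convexity to obtain $\exp(C'\mu^2\sum_{i\neq j}a_{ij}^2\pi_{1i}\pi_{2j})$; the expectation over $\eta$ is then trivial since $\sum_{i\in I_1,j\in I_0}(\cdot)\le\sum_{i\neq j}(\cdot)$, and collecting the absolute constants from the Gaussian comparison and the chaos bound yields the stated prefactor of the form $1.44\,C_4$. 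I expect the main obstacle to be precisely this last step: each $\gamma_{1i}$ is shared across many terms $j$, so the exponent is \emph{not} a sum of independent summands and a single product factorization is unavailable. The resolution is to condition and linearize one group of indicators at a time, using $e^{\tau\gamma c}\le 1+\gamma(e^{\tau c}-1)$ for $\gamma\in[0,1]$ followed by $1+x\le e^x$, and to keep $\|\widetilde{\bA}\|_2\le\|\bA\|_2$ so that the radius of validity of the chaos bound survives the masking.
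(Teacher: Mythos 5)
Your proposal is correct and follows essentially the same route as the paper's proof: decoupling with Bernoulli$(1/2)$ selectors plus Jensen, replacement of the sub-Gaussian vectors by Gaussians, a Gaussian chaos moment-generating-function bound controlled by $\|\widetilde{\bA}\|_F^2=\sum_{i\in I_1,j\in I_0}a_{ij}^2\gamma_{1i}\gamma_{2j}$ and $\|\widetilde{\bA}\|_2\le\|\bA\|_2$, and finally a conditional linearization of the shared Bernoulli masks (the paper's Lemma \ref{lem:step4}), whose row-sum condition $\max_i\sum_j a_{ij}^2\le\|\bA\|_2^2$ you correctly identify as the source of the radius restriction. The only cosmetic difference is that you Gaussianize both $\bZ_1$ and $\bZ_2$ and invoke the decoupled chaos bound as a black box, whereas the paper Gaussianizes only $\bZ_2$ and diagonalizes the resulting single-vector quadratic form $g^\top B_{\eta,\gamma}g$ explicitly; the computations are equivalent.
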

\noindent
The proof is pending until Appendix \ref{app:proof_lem_cross_product_mgf}. Without loss of generality, we can assume $||Z_{1i}||_{\psi_2}=||Z_{2i}||_{\psi_2} = 1$; otherwise set $Z_{1i} \leftarrow Z_{1i}/||Z_{1i}||_{\psi_2}, Z_{2i} \leftarrow Z_{2i} / ||Z_{2i}||_{\psi_2}$. Using Lemma \ref{lem:cross_product_mgf}, we get for $x>0$ and $0<\lambda < 1/(2 \sqrt{C_4} || A||_2)$
$$
\begin{array}{l}
{\rm P}(S_0 > x) \\
= {\rm P}\left(\exp(\lambda S_0) > \exp(\lambda x)\right) \\
\le \dfrac{\mathbb{E} \exp(\lambda S_0)}{\exp(\lambda x)} \\
\le \exp\left\{ -\lambda x + 
1.44 C_4 \lambda^2 \sum_{i\neq j} a_{ij} \pi_{1i}\pi_{2j} \right\}.
\end{array}
$$
For the optimal choice of $\lambda$, that is, 
$$
\lambda = \min\left(
\dfrac{x}{2.88 C_4 \sum_{i\neq j} a_{ij} \pi_{1i}\pi_{2j}},
\dfrac{1}{2 \sqrt{C_4} || A||_2}
\right)
$$
we can obtain the concentration bound
$$
{\rm P}(S_0 > x) \le 
\exp \left\{
-c\min\left(
\dfrac{x^2}{\sum_{i\neq j} a_{ij} \pi_{1i}\pi_{2j}},
\dfrac{x}{|| A||_2}
\right)
\right\}.
$$
Considering $-A_{\rm off}$ instead of $A_{\rm off}$ in the theorem, we have the same probability bound for ${\rm P}(S_0 < -x)$, $x>0$, which concludes the proof.

\subsection{Proof of Lemma \ref{lem:cross_product_mgf}}\label{app:proof_lem_cross_product_mgf}
This proof follows the logic of the proof of (9) in \cite{Zhou:2019}. We first decouple the off-diagonal sum $S_{\rm off}$.

\subsubsection*{Step 1. Decoupling}
We introduce Bernoulli variables $\eta = (\eta_1, \ldots, \eta_n)^\top$ with $\mathbb{E}\eta_k = 1/2$ for any $k$. They are independent with each other and also independent of $Z_1, Z_2$ and $\gamma_1, \gamma_2$. Given $\eta$, we define $Z_1^{\eta}$ by a subvector of $Z_1$ at which $\eta_i=1$ and $Z_2^{1-\eta}$ by a subvector of $Z_2$ at which $\eta_j=0$, respectively. Let $\mathbb{E}_Q$ be the expectation with respect to a random variable $Q$ where $Q$ can be any of $Z_1, Z_2, \gamma_1, \gamma_2, \eta$ in our context, or $X, \gamma$ to denote $Z_1, Z_2$ and $\gamma_1, \gamma_2$ together. Define a random variable 
$$
S_\eta = \sum_{1 \le i, j \le n} a_{ij} \eta_{i}(1-\eta_{j})\gamma_{1i}\gamma_{2j} Z_{1i}Z_{2j}.
$$
Using $S_{\rm off} = 4\mathbb{E}_\gamma S_\eta$ and Jensen's inequality with $x \mapsto e^{ax}$ for any $a \in \mathbb{R}$, we get
$$
\mathbb{E} \exp(\lambda S_{\rm off}) = 	\mathbb{E}_\gamma  \mathbb{E}_X \exp(\mathbb{E}_\eta 4\lambda S_\eta) \le 
\mathbb{E}_\gamma  \mathbb{E}_X \mathbb{E}_\eta \exp(4\lambda S_\eta).
$$
We condition all the variables except $Z_2^{1-\eta} = (Z_{2j}, \eta_j = 0)$ on $\exp(4\lambda S_\eta)$ and consider its moment generating function denoted by $f(\gamma_1, \gamma_2, \eta, Z_1^\eta)$
$$
f(\gamma_1, \gamma_2, \eta, Z_1^\eta) = \mathbb{E}\left(
\exp(4\lambda S_\eta) | \gamma_1, \gamma_2, \eta, Z_1^\eta
\right).
$$
Note that $S_\eta$ can be seen a linear combination of sub-Gaussian variables $Z_{2j}$, for $j$ such that $\eta_j = 0$, that is,
$$
S_\eta = \sum_{j: \eta_j = 0}Z_{2j} \left(\sum_{i: \eta_i = 1} a_{ij} \gamma_{1i}\gamma_{2j} Z_{1i}\right),
$$
conditional on all other variables. Thus, the conditional distribution of $S_\eta$ is sub-Gaussian satisfying
$$
||S_\eta||_{\psi_2} \le C_0 \sigma_{\eta, \gamma},
$$
where $\sigma_{\eta, \gamma}^2 = \sum\limits_{j: \eta_j = 0}\gamma_{2j} \left(\sum\limits_{i: \eta_i = 1} a_{ij} \gamma_{1i} Z_{1i}\right)^2$.
Therefore, we have that for some $C'>0$
\begin{equation}\label{eq:step1_mgf_bound}
	f(\gamma_1, \gamma_2, \eta, Z_1^\eta) = \mathbb{E}\left(
	\exp(4\lambda S_\eta) | \gamma_1, \gamma_2, \eta, Z_1^\eta
	\right) \le \exp(C' \lambda^2 \sigma_{\eta, \gamma}^2).
\end{equation}
Taking the expectations on both sides, we get
$$
\mathbb{E}_\gamma \mathbb{E}_{Z_1^\eta} f(\gamma_1, \gamma_2, \eta, Z_1^\eta) \le 
\mathbb{E}_\gamma \mathbb{E}_{Z_1^\eta} \exp(C' \lambda^2 \sigma_{\eta, \gamma}^2) =: \tilde{f}_{\eta}.
$$
\subsubsection*{Step 2. Reduction to normal random variables}
Assume that $\eta$, $\gamma_1, \gamma_2$, and $Z_1^\eta$ are fixed. Let $g=(g_1, \ldots, g_n)^\top$ be given where $g_i$ is i.i.d. from $N(0,1)$. Replacing $Z_{2j}$ by $g_j$ in $S_\eta$, we define a random variable $Z$ by
$$
Z = \sum_{j: \eta_j = 0}g_j \left(\sum_{i: \eta_i = 1} a_{ij} \gamma_{1i}\gamma_{2j} Z_{1i}\right).
$$
Due to the property of Gaussian variables, the conditional distribution of $Z$ follows $N(0, \sigma_{\eta, \gamma}^2)$. Hence, its conditional moment generating function is given by
\begin{equation}\label{eq:step2_mgf_bound}
	\mathbb{E}_g \exp(t Z) = \mathbb{E} \left[\exp(t Z) | \eta, \gamma_1, \gamma_2, Z_1^\eta \right] = \exp(t^2 \sigma_{\eta, \gamma}^2 / 2).
\end{equation}
Now, consider $Z$ as a linear combination of $\{Z_{1i}: \eta_i = 1\}$ by rewriting it by
$$
Z = \sum_{i: \eta_i = 1} Z_{1i} \left(\sum_{j: \eta_j = 0}g_j a_{ij} \gamma_{1i}\gamma_{2j} \right),
$$
where $Z_{1i}$'s are regarded random variables and others fixed. Then, we can get for some $C_3>0$
\begin{equation}\label{eq:step2_mgf_normal}
	\mathbb{E}_{Z_1^\eta} \exp(\sqrt{2C'}\lambda Z) = \mathbb{E} \left[\exp(\sqrt{2C'}\lambda Z) | g, \eta, \gamma_1, \gamma_2
	\right]  \le \exp\left(
	C_3 \lambda^2 \sum_{i: \eta_i=1} \gamma_{1i} \left(\sum_{j: \eta_j = 0}g_j a_{ij} \gamma_{2j} \right)^2
	\right).
\end{equation}
We now get an upper bound of $\tilde{f}_{\eta}$ from step 1 by using (\ref{eq:step2_mgf_bound}) and (\ref{eq:step2_mgf_normal}). First, choosing $t=\sqrt{2C'}\lambda$ at (\ref{eq:step2_mgf_bound}) gives the same term in (\ref{eq:step1_mgf_bound})
$$
\tilde{f}_{\eta} = \mathbb{E}_\gamma \mathbb{E}_{Z_1^\eta} \mathbb{E}_g \exp(\sqrt{2C'}\lambda Z) = \mathbb{E}_\gamma \mathbb{E}_g  \mathbb{E}_{Z_1^\eta}  \exp(\sqrt{2C'}\lambda Z).
$$
Then, we apply (\ref{eq:step2_mgf_normal}) to the right-hand side of the above:
\begin{equation}\label{eq:step2_final}
	\tilde{f}_{\eta} \le 
	\mathbb{E}_\gamma \mathbb{E}_g \exp\left(
	C_3 \lambda^2 \sum_{i: \eta_i=1} \gamma_{1i} \left(\sum_{j: \eta_j = 0}g_j a_{ij} \gamma_{2j} \right)^2 \right) = 
	\mathbb{E} \left[ \exp\left(
	C_3 \lambda^2 (g^\top B_{\eta, \gamma} g) \right) | \eta
	\right],
\end{equation}

where $B_{\eta, \gamma}$ has its $(k,\ell)$-th element $\sum\limits_{i: \eta_i = 1}\gamma_{1i} a_{ik} a_{i\ell} \gamma_{2k}\gamma_{2\ell}$ if $k,\ell \in \{j: \eta_j = 0\}$; $0$, otherwise. Note that $B_{\eta, \gamma}$ is symmetric.

\subsubsection*{Step 3. Integrating out the normal random variables}	
For fixed $\gamma_1, \gamma_2, \eta$, the quadratic form $g^\top B_{\eta, \gamma} g$ is identical in distribution with $\sum_{j=1}^n s_j^2 g_j^2$ due to the rotational invariance of $g$ where $s_j^2$ is the eigenvalue of $B_{\eta, \gamma}$. Note that the eigenvalues satisfy, for any realization of $\gamma_1, \gamma_2, \eta$,
\begin{equation}\label{eq:step3_eigenvalues}
	\begin{array}{rcl}
		\max\limits_{1\le j \le n} s_j^2 &= &||B_{\eta, \gamma}||_2 \le ||A||_2^2\\
		\sum\limits_{1\le j \le n} s_j^2 &= &\tr(B_{\eta, \gamma}) = \sum\limits_{i: \eta_i = 1}\gamma_{1i} \sum\limits_{j:\eta_j=0} a_{ij}^2\gamma_{2j}.
	\end{array}
\end{equation}
Now, using $g_j^2 \sim \chi^2_1$ and $\mathbb{E}\exp(t g_j^2) \le (1-2t)^{-1/2} \le \exp(2t)$ for $t < 1/4$, we get from (\ref{eq:step2_final}) and (\ref{eq:step3_eigenvalues})
$$
\begin{array}{l}
\tilde{f}_{\eta} \le \mathbb{E} \left[ \exp\left(
C_3 \lambda^2 (g^\top B_{\eta, \gamma} g) \right) | \eta
\right] \\[0.5em]
= \mathbb{E}_\gamma \mathbb{E} \left[ \exp\left(
C_3 \lambda^2 (g^\top B_{\eta, \gamma} g) \right) | \eta, \gamma_1, \gamma_2
\right] \\[0.5em]
=  \mathbb{E}_\gamma \mathbb{E} \left[ \exp\left(
\sum\limits_{j=1}^n C_3 \lambda^2 s_j^2 g_j^2 \right) | \eta, \gamma_1, \gamma_2
\right] \\[0.5em]
\le  \mathbb{E}\left[ \prod\limits_{j=1}^n \exp(2C_3 \lambda^2 s_j^2)| \eta \right]\\[0.5em]
=  \mathbb{E} \left[ \exp\left(2C_3 \lambda^2  \sum\limits_{i: \eta_i = 1}\gamma_{1i} \sum\limits_{j:\eta_j=0} a_{ij}^2\gamma_{2j} \right) | \eta \right]
\end{array}
$$
for $\lambda^2 < 1/(4C_3 ||A||_2^2) (< 1/(4C_3 \max_j s_j^2))$. It is worth noting that the Bernoulli variables $\gamma_1, \gamma_2$ are decoupled by $\eta$.

\subsubsection*{Step 4. Integrating out the Bernoulli random variables}	
We now integrate out $\gamma_1, \gamma_2$ from $\tilde{f}_\eta$ by using the following lemma. For $0 < \lambda^2 \le 1/(8C_3 || A||_2^2)$, we have
$$
\tilde{f}_{\eta} \le  \mathbb{E} \left[ \exp\left(2C_3 \lambda^2  \sum\limits_{i: \eta_i = 1}\gamma_{1i} \sum\limits_{j:\eta_j=0} a_{ij}^2\gamma_{2j} \right) | \eta \right] \le \exp\left(
2.88 C_3 \lambda^2 \sum\limits_{i \neq j} a_{ij}^2 \pi_{1i} \pi_{2i}
\right)
$$
\begin{lemma}\label{lem:step4}
	For $0 < \tau \le 1/(4 || A||_2^2)$, the conditional expectation satisfies
	$$
	\mathbb{E} \left[ \exp\left(\tau \sum\limits_{i: \eta_i = 1}\gamma_{1i} \sum\limits_{j:\eta_j=0} a_{ij}^2\gamma_{2j} \right) | \eta \right] \le
	\exp\left(
	1.44 \tau \sum\limits_{i \neq j} a_{ij}^2 \pi_{1i} \pi_{2i}
	\right).
	$$
\end{lemma}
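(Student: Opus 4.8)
The plan is to integrate out the two families of Bernoulli variables one at a time, exploiting the fact that conditioning on $\eta$ renders all the variables appearing in the sum mutually independent. The outer sum runs over $i$ with $\eta_i=1$ while the inner sum runs over $j$ with $\eta_j=0$, so the index sets $\{i:\eta_i=1\}$ and $\{j:\eta_j=0\}$ are disjoint. Since the pairs $(\gamma_{1j},\gamma_{2j})$ are independent across $j$, this disjointness means that, once $\eta$ is fixed, the collection $\{\gamma_{1i}:\eta_i=1\}\cup\{\gamma_{2j}:\eta_j=0\}$ is jointly independent, and in particular no $\gamma_{1i}$ is ever paired with the $\gamma_{2i}$ it may be correlated with.

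First I would condition on $\eta$ together with the whole vector $\gamma_2$ and integrate out $\gamma_1$. Writing $W_i=\sum_{j:\eta_j=0}a_{ij}^2\gamma_{2j}$, the conditional MGF factorizes over $i$, each factor being the Bernoulli MGF $1-\pi_{1i}+\pi_{1i}e^{\tau W_i}$. Applying $1+x\le e^x$ turns the product into $\exp\!\big(\sum_i \pi_{1i}(e^{\tau W_i}-1)\big)$. The key quantitative input is the operator-norm control $W_i\le\sum_j a_{ij}^2=(AA^\top)_{ii}\le\|A\|_2^2$, so that $\tau W_i\le\tau\|A\|_2^2\le 1/4$; on $[0,1/4]$ one has the linearization $e^x-1\le 1.2\,x$, which gives $\mathbb{E}\big[\exp(\tau T)\mid\eta,\gamma_2\big]\le\exp\!\big(1.2\,\tau\sum_{i:\eta_i=1}\pi_{1i}W_i\big)$, where $T$ denotes the double sum in the statement.

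Next I would integrate out $\gamma_2$. Rewriting the exponent as $\sum_{j:\eta_j=0}(1.2\,\tau V_j)\gamma_{2j}$ with $V_j=\sum_{i:\eta_i=1}\pi_{1i}a_{ij}^2$, the same Bernoulli computation applies. Using $V_j\le(A^\top A)_{jj}\le\|A\|_2^2$ one gets $1.2\,\tau V_j\le 1.2\,\tau\|A\|_2^2\le 0.3$, and since $e^x-1\le 1.2\,x$ still holds on $[0,0.3]$, a second application produces the second factor of $1.2$ and hence the bound $\exp\!\big(1.44\,\tau\sum_{j:\eta_j=0}\pi_{2j}V_j\big)$. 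Finally, because $\{i:\eta_i=1\}$ and $\{j:\eta_j=0\}$ are disjoint, every term of $\sum_{j:\eta_j=0}\sum_{i:\eta_i=1}\pi_{1i}\pi_{2j}a_{ij}^2$ is off-diagonal, so the double sum is dominated by $\sum_{i\neq j}a_{ij}^2\pi_{1i}\pi_{2j}$, the claimed right-hand side.

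The main obstacle is not the MGF bookkeeping but pinning down the constants. The value $1.44=1.2^2$ appears precisely because the hypothesis $\tau\le 1/(4\|A\|_2^2)$, combined with the row/column bounds $\sum_j a_{ij}^2\le\|A\|_2^2$, keeps both exponents in the regime where $e^x-1\le 1.2\,x$ is valid, the linearization being invoked once per integration step. The one point needing care is checking that the \emph{second} stage still lies below the validity threshold (the argument is bounded by $0.3$ rather than $1/4$, which is exactly why the single constant $1.2$ suffices at both stages).
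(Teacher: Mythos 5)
Your proposal is correct and follows essentially the same route as the paper: factorize the conditional MGF over the independent $\gamma_{1i}$ (using that $\{i:\eta_i=1\}$ and $\{j:\eta_j=0\}$ are disjoint), apply the linearization $e^x-1\le 1.2x$ together with the row-sum bound $\sum_j a_{ij}^2\le\|A\|_2^2$ under $\tau\le 1/(4\|A\|_2^2)$, repeat for $\gamma_2$ to pick up the second factor of $1.2$, and dominate the restricted double sum by the full off-diagonal sum. Your explicit check that the second stage's argument ($\le 0.3$) still lies in the validity range of the linearization matches the paper's, and your $\pi_{2j}$ in the final bound corrects what is evidently a typo ($\pi_{2i}$) in the paper's statement.
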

\begin{proof}
	Due to the independence of $\gamma_{1i}, i=1,2,\ldots, n$, we get
	$$
	\begin{array}{l}
	\mathbb{E} \left[ \exp\left(\tau \sum\limits_{i: \eta_i = 1}\gamma_{1i} \sum\limits_{j:\eta_j=0} a_{ij}^2\gamma_{2j} \right) | \gamma_2^{1-\eta}, \eta \right]\\
	= \prod\limits_{i: \eta_i = 1}  \mathbb{E} \left[   \exp\left(\tau \gamma_{1i} \sum\limits_{j:\eta_j=0} a_{ij}^2\gamma_{2j} \right) | \gamma_2^{1-\eta}, \eta \right]	\\
	= \prod\limits_{i: \eta_i = 1}  \left[(1- \pi_{1i}) +  \pi_{1i} \exp\left(\tau \sum\limits_{j:\eta_j=0} a_{ij}^2\gamma_{2j} \right)  \right].
	\end{array}
	$$
	Then, we use the local approximation of the exponential function:
	$$
	e^x - 1 \le 1.2 x, \quad 0 \le x \le 0.35.
	$$
	To use it, $\tau$ should satisfies, for given $\eta$ and $\gamma_2$,
	\begin{equation}\label{eq:step4_tau_cond}
		\tau \sum\limits_{j:\eta_j=0} a_{ij}^2\gamma_{2j}  < 0.35,
	\end{equation}
	which leads to 
	$$
	(1- \pi_{1i}) +  \pi_{1i} \exp\left(\tau \sum\limits_{j:\eta_j=0} a_{ij}^2\gamma_{2j}\right) \le 
	1 + 1.2 \pi_{1i} \tau \sum\limits_{j:\eta_j=0} a_{ij}^2\gamma_{2j} \le
	\exp\left(
	1.2 \pi_{1i} \tau \sum\limits_{j:\eta_j=0} a_{ij}^2\gamma_{2j}
	\right),
	$$
	where we use $1 + x \le e^x$ in the last inequality. A sufficient condition for $\tau$ for any realization of $\eta$ and $\gamma_2$ in (\ref{eq:step4_tau_cond}) is $\tau \le 1/(4||A||_2^2)$ since
	$$
	\tau \sum\limits_{j:\eta_j=0} a_{ij}^2\gamma_{2j} \le \tau \max_{i} \sum_j a_{ij}^2 \le \tau ||A||_2^2 \le 0.25.
	$$
	Hence, for $\tau \le 1/(4||A||_2^2)$,
	$$
	\begin{array}{l}
	\mathbb{E} \left[ \exp\left(\tau \sum\limits_{i: \eta_i = 1}\gamma_{1i} \sum\limits_{j:\eta_j=0} a_{ij}^2\gamma_{2j} \right) |  \eta \right] \\
	= \mathbb{E}_{\gamma_2^{1-\eta}} \mathbb{E} \left[ \exp\left(\tau \sum\limits_{i: \eta_i = 1}\gamma_{1i} \sum\limits_{j:\eta_j=0} a_{ij}^2\gamma_{2j} \right) | \gamma_2^{1-\eta}, \eta \right]\\
	\le  \mathbb{E} \left[\exp\left( \sum\limits_{i: \eta_i = 1} 1.2 \pi_{1i} \tau \sum\limits_{j:\eta_j=0} a_{ij}^2\gamma_{2j}  \right) |  \eta \right]\\
	= \prod\limits_{j:\eta_j=0} \mathbb{E} \left[\exp\left(  1.2 \tau \gamma_{2j} \sum\limits_{i: \eta_i = 1} \pi_{1i} a_{ij}^2  \right) |  \eta \right]\\
	\le \prod\limits_{j:\eta_j=0} \exp\left(  1.2^2 \tau \pi_{2j} \sum\limits_{i: \eta_i = 1} \pi_{1i} a_{ij}^2  \right).
	\end{array}
	$$
	We apply the similar argument used for $\gamma_{2j}, j \in \{j: \eta_j=0\}$ to $\gamma_{1i}, i \in \{i: \eta_j=1\}$ in the last inequality. Note that $\tau \le 1/(4||A||_2^2)$ is also sufficient since
	$$
	1.2\tau \sum\limits_{i: \eta_j=1} a_{ij}^2\pi_{1i} \le 1.2\tau \max_{j} \sum_i a_{ij}^2 \le 1.2\tau ||A||_2^2 \le 0.3.
	$$
	Finally, we observe that
	$$
	\exp\left(  1.2^2 \tau \sum\limits_{j:\eta_j=0}  \sum\limits_{i: \eta_i = 1}  a_{ij}^2 \pi_{1i} \pi_{2j}\right)  \le 
	\exp\left(  1.2^2 \tau \sum\limits_{i \neq j}  a_{ij}^2 \pi_{1i} \pi_{2j}\right),
	$$
	which concludes the proof.
\end{proof}

\subsubsection*{Step 5. Combining things together}
Assume $|\lambda| < 1/(\sqrt{8C_3} || A||_2)$. Combining all the steps above, we get
$$
\begin{array}{l}
\mathbb{E} \exp(\lambda S_{\rm off})\\
\le 
\mathbb{E}_\eta \mathbb{E}_\gamma  \mathbb{E}_X \exp(4\lambda S_\eta)\\
\le \mathbb{E}_\eta \mathbb{E}_\gamma  \mathbb{E}_{Z_1^\eta} \mathbb{E}\left(
\exp(4\lambda S_\eta) | \gamma_1, \gamma_2, \eta, Z_1^\eta
\right) \\
\le \mathbb{E}_\eta  \tilde{f}_{\eta}\\
\le  \exp\left(
2.88 C_3 \lambda^2 \sum\limits_{i \neq j} a_{ij}^2 \pi_{1i} \pi_{2i}
\right),
\end{array}
$$
which proves Lemma \ref{lem:cross_product_mgf}.

\section{Proof of Theorem 2}\label{app:proof_HWineq_nzm}
Let us define the centered sub-Gaussian variable $Z_{ij} = \tilde{Z}_{ij} - \mu_{ij}$. Then, it is easy to check that the bilinear form is decomposed into 8 terms.
$$
\begin{array}{rcl}
S &\equiv&  (\tilde{\bZ}_1 * \bgamma_1)^\top \bA (\tilde{\bZ}_2 * \bgamma_2) - \mathbb{E}(\tilde{\bZ}_1 * \bgamma_1)^\top \bA (\tilde{\bZ}_2 * \bgamma_2) \\
& =& \sum\limits_{i,j}a_{ij} \gamma_{1i}\gamma_{2j} Z_{1i}Z_{2j} - \mathbb{E}\left[\sum\limits_{i,j}a_{ij} \gamma_{1i}\gamma_{2j} Z_{1i}Z_{2j}\right] \\
&& + \sum\limits_{i,j}a_{ij} \mu_{1i} \gamma_{1i}\gamma_{2j}Z_{2j} \\
&& + \sum\limits_{i,j}a_{ij} \mu_{2j} \gamma_{1i}\gamma_{2j}Z_{1i} \\
&& + \sum\limits_{i,j}a_{ij} \mu_{1i}\mu_{2j} \gamma_{1i}\gamma_{2j} - \mathbb{E}\left[\sum\limits_{i,j}a_{ij} \mu_{1i}\mu_{2j} \gamma_{1i}\gamma_{2j}\right]\\
&=& \sum\limits_{i,j}a_{ij} \gamma_{1i}\gamma_{2j} Z_{1i}Z_{2j} - \mathbb{E}\left[\sum\limits_{i,j}a_{ij} \gamma_{1i}\gamma_{2j} Z_{1i}Z_{2j}\right] \\
&& + \sum\limits_{i,j}a_{ij} \mu_{1i} (\gamma_{1i}-\pi_{1i})\gamma_{2j}Z_{2j} + \sum\limits_{i,j}a_{ij} \mu_{1i} \pi_{1i}\gamma_{2j}Z_{2j}\\
&& + \sum\limits_{i,j}a_{ij} \mu_{2j} \gamma_{1i}Z_{1i} (\gamma_{2j}-\pi_{2j}) + \sum\limits_{i,j}a_{ij} \mu_{2j}\pi_{2j} \gamma_{1i}Z_{1i} \\
&& + \sum\limits_{i,j}a_{ij} \mu_{1i}\mu_{2j} (\gamma_{1i}-\pi_{1i})(\gamma_{2j} - \pi_{2j}) + 
\sum\limits_{i,j}a_{ij} \mu_{1i}\mu_{2j} \pi_{2j} (\gamma_{1i}-\pi_{1i}) + 
\sum\limits_{i,j}a_{ij} \mu_{1i} \pi_{1i} \mu_{2j}  (\gamma_{2j} - \pi_{2j})\\
&\equiv& S_{1,1} \\
&& + S_{2,1} + S_{2,2}\\
&& + S_{3,1} + S_{3,2}\\
&& + S_{4,1} + S_{4,2} + S_{4,3}\\
\end{array}
$$
We will use Theorem \ref{thm:HWineq_claim} for the bilinear forms $S_{1,1}, S_{2,1}, S_{3,1}, S_{4,1}$, and Hoeffding's inequality, stated below, for the linear combinations $S_{2,2}, S_{3,2}, S_{4,2}, S_{4,3}$. 
For $S_{2,1}=\sum\limits_{i,j}a_{ij} \mu_{1i} (\gamma_{1i}-\pi_{1i})\gamma_{2j}Z_{2j}$, we treat $\gamma_{1i}-\pi_{1i}$ as the centered sub-Gaussian variables with $\psi_2$-norm at most $1$ in which the $n$ sub-Gaussian are completely observed.
Applying the union bound and combining the results of $S_{i,j}$, we can derive the bound for ${\rm P} \Big[\big| S \big| > t \Big]$.
\begin{theorem}[Hoeffding's inequality]\label{thm:hoeffing}
	Let $Z_i$, $i=1,\ldots, n$, be mean-zero independent sub-Gaussian variables satisfying $\max\limits_{1 \le i \le n} ||Z_i||_{\psi_2} \le K$, and $\gamma_i$, $i=1,\ldots, n$, be independent Bernoulli variables. Also, suppose $Z_i$ and $\gamma_i$ are independent for all $i$. Let $\balpha = (\alpha_i, 1\le i \le n)^\top \in \mathbb{R}^n$ be a given coefficient vector. Then, we have
	$$
	\mathbb{E}\exp\left(
	\lambda \sum_{i=1}^n \alpha_i \gamma_i Z_i
	\right) \le \exp\left(
	C \lambda^2 K^2 \sum_{i=1}^n \alpha_i^2
	\right), \quad \lambda>0, 
	$$
	for some numerical constant $C>0$. Moreover, we have
	$$
	{\rm P} \bigg[
	\big| \sum\limits_{i=1}^n \alpha_i \gamma_i Z_i \big| > t 
	\bigg] \le 2\exp \left\{
	- \dfrac{ct^2}{K^2  ||\balpha||_2^2}
	\right\}, \quad t > 0, 
	$$
	for some numerical constant $c>0$.
\end{theorem}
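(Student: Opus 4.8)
The plan is to reduce the statement to the classical sub-Gaussian Hoeffding bound by showing that each masked product $\gamma_i Z_i$ is itself a centered sub-Gaussian variable and then exploiting independence across $i$. First I would factorize the moment generating function: since the pairs $(\gamma_i, Z_i)$ are independent across $i$,
$$
\mathbb{E}\exp\left(\lambda \sum_{i=1}^n \alpha_i \gamma_i Z_i\right) = \prod_{i=1}^n \mathbb{E}\exp\left(\lambda \alpha_i \gamma_i Z_i\right).
$$
This reduces the problem to controlling a single factor.

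Next I would bound each factor. Write $W_i = \gamma_i Z_i$. Because $\gamma_i \in \{0,1\}$, we have the pointwise domination $|W_i| \le |Z_i|$, so $\mathbb{E}|W_i|^p \le \mathbb{E}|Z_i|^p$ for every $p \ge 1$ and hence $||W_i||_{\psi_2} \le ||Z_i||_{\psi_2} \le K$. Moreover, by the within-pair independence of $\gamma_i$ and $Z_i$, $\mathbb{E}W_i = \mathbb{E}\gamma_i \cdot \mathbb{E}Z_i = 0$. Thus each $\alpha_i W_i$ is centered sub-Gaussian with $||\alpha_i W_i||_{\psi_2} \le |\alpha_i| K$, and the standard centered sub-Gaussian MGF estimate yields $\mathbb{E}\exp(\lambda \alpha_i W_i) \le \exp(C \lambda^2 \alpha_i^2 K^2)$ for a numerical constant $C>0$. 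This estimate can be quoted from a standard reference such as \cite{Vershynin:2018}, or re-derived by a Taylor expansion of the exponential in the same spirit as the proof of Lemma \ref{lem:subexp_mgf}, using $\mathbb{E}|W_i|^p \le p^{p/2} K^p$ together with Stirling's bound to sum the tail of the series. Multiplying the per-coordinate bounds across $i$ gives the first display with $\sum_{i=1}^n \alpha_i^2 = ||\balpha||_2^2$.

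Finally, I would pass from the MGF bound to the tail bound by the Chernoff method. For $t>0$, Markov's inequality applied to $\exp(\lambda \sum_i \alpha_i \gamma_i Z_i)$ gives ${\rm P}[\sum_i \alpha_i \gamma_i Z_i > t] \le \exp(-\lambda t + C\lambda^2 K^2 ||\balpha||_2^2)$, and optimizing over $\lambda>0$ at $\lambda = t/(2CK^2||\balpha||_2^2)$ produces the one-sided bound $\exp(-ct^2/(K^2||\balpha||_2^2))$ with $c = 1/(4C)$. Since the MGF estimate depends on $\balpha$ only through $||\balpha||_2^2$, replacing $\balpha$ by $-\balpha$ yields the identical bound for the lower tail ${\rm P}[\sum_i \alpha_i \gamma_i Z_i < -t]$; a union bound over the two tails then supplies the factor of $2$ in the stated inequality.

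I do not expect a serious obstacle, as the argument is a clean reduction to a classical estimate. The only points requiring care are the two observations that make the reduction valid: that multiplication by the Bernoulli mask $\gamma_i$ cannot increase the $\psi_2$-norm, by monotonicity of $L^p$-norms under the domination $|\gamma_i Z_i| \le |Z_i|$, and that it preserves the zero mean through the independence of $\gamma_i$ and $Z_i$. Once these are in place, $\gamma_i Z_i$ inherits the centered sub-Gaussian property of $Z_i$ and the result is exactly the usual sub-Gaussian Hoeffding inequality.
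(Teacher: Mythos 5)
Your proof is correct. The paper states this theorem without proof, treating it as a standard fact, and your argument is exactly the standard one it implicitly relies on: the pointwise domination $|\gamma_i Z_i|\le |Z_i|$ shows the Bernoulli mask cannot increase the $\psi_2$-norm, within-pair independence preserves the zero mean, and then the classical centered sub-Gaussian MGF bound, factorization over independent coordinates, and the Chernoff/union-bound step give both displays. The only point worth flagging is that factorizing the MGF needs joint independence of the pairs $(\gamma_i,Z_i)$ across $i$, which is slightly stronger than the literal hypotheses but is clearly the intended reading and is what holds in the paper's application.
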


\end{document}